\newcommand{\R}{\mathbb{R}}
\newcommand{\sr}{sub-Riemannian }
\newcommand{\D}{\mathcal{D}}
\newcommand{\A}{\mathcal{A}}
\newcommand{\diverg}{\mathrm{div}_\omega}
\newcommand{\deltaS}{\delta_\mathrm{S}}
\newcommand{\smallpar}{r_0}
\newcommand{\eps}{\varepsilon}
\newcommand{\Spar}{S_{\smallpar}}
\newcommand{\muS}{\mu_\mathrm{S}}
\newcommand{\spn}{\mathrm{span}}
\theoremstyle{plain}
\newtheorem{thm}{Theorem}[section]
\newtheorem{lem}[thm]{Lemma}
\newtheorem*{lem*}{Lemma}
\newtheorem{prop}[thm]{Proposition}
\theoremstyle{definition}
\newtheorem{defn}[thm]{Definition}
\theoremstyle{remark}
\newtheorem{rmk}[thm]{Remark}
\title{The Relative Heat Content for Submanifolds in Sub-Riemannian Geometry}
\date{\today}
\author[Tommaso Rossi]{Tommaso Rossi}
\address{Institut f\"ur Angewandte Mathematik, Universit\"at Bonn, Bonn, Germany}
\email{\href{mailto:rossi@iam.uni-bonn.de}{rossi@iam.uni-bonn.de}}
\begin{document}

\begin{abstract}
We study the small-time asymptotics of the relative heat content for submanifolds in sub-Riemannian geometry. First, we prove the existence of a smooth tubular neighborhood for submanifolds of any codimension, assuming they do not have characteristic points. Next, we propose a definition of relative heat content for submanifolds of codimension $k\geq 1$ and we build an approximation of this quantity, via smooth tubular neighborhoods. Finally, we show that this approximation fails to recover the asymptotic expansion of the relative heat content of the submanifold, by studying an explicit example.
\end{abstract}

\maketitle 
\tableofcontents

\section{Introduction}
In this paper, we study the small-time behavior of the relative heat content for submanifolds in sub-Riemannian geometry. 
Loosely speaking, the relative heat content for a submanifold $S\subset M$ can be regarded as the total amount of heat contained in $S$ at time $t$, corresponding to a uniform initial temperature distribution concentrated on $S$. When $S=\Omega$ is an open and bounded domain, the problem of finding an asymptotic expansion for the heat content has been extensively studied, see for example \cite{vdB-LG,vdB-G1,Savo-heat-cont-asymp,vdB-D-G,D-G,MR3358065} for the Euclidean and Riemannian case, and \cite{TW-heat-cont-hei,MR4223354,ARR-relative} for the \sr case. In particular, in this situation, in both the Riemannian and \sr setting, the small-time behavior of the heat content associated with $\Omega$ encodes geometrical information of $\partial\Omega$, such as its perimeter or its mean curvature (c.f.\ Theorem \ref{t:rel}). Thus, we may expect that, in an analogous way, the asymptotics of the relative heat content for a submanifold of higher codimension detect its geometrical invariants.  

Up to our knowledge, the relative heat content for submanifolds has never been systematically studied, not even in Riemannian geometry, so we propose the following definition. Let $M$ be a \sr manifold, equipped with a smooth measure $\omega$, and let $S\subset M$ be a smooth, compact submanifold of codimension $k\geq 0$. Let $\omega$ be a smooth measure on $M$, and let $\mu$ be a probability measure on $S$. Then, we consider $\mu$ as the initial datum for the heat equation in the sense of distributions, and study the associated Cauchy problem:
\begin{equation}\label{eq:intro_he_submnf}
\begin{aligned}
(\partial_t -\Delta)u(t,x)  =  0, & &\qquad &\forall (t,x) \in (0,\infty) \times M, \\
u(t,\cdot) \xrightarrow{t\to0}  \mu, & &  \qquad & \text{in }\mathcal{D}'(M),
\end{aligned}
\end{equation}
where $\Delta=\diverg\circ\nabla$ is the usual sub-Laplacian associated with $\omega$. A solution to this problem, in the sense of distribution, is given by 
\begin{equation}
\label{eq:intro_sol_S}
u(t,x)=\int_Sp_t(x,y)d\mu(x), \qquad\forall\, (t,x) \in (0,\infty) \times M,
\end{equation}
where $p_t(x,y)$ is the usual heat kernel associated with $\Delta$ and $\omega$. We define the \emph{relative heat content for a submanifold} $S$ as:
\begin{equation}
\label{eq:intro_hc_S}
H_S(t)=\int_S\int_Sp_t(x,y)d\mu(x)d\mu(y),\qquad\forall\,t>0.
\end{equation}
Notice that, on the one hand if $S=\{x_0\}$ and $\mu=\delta_{x_0}$, we obtain the trace heat kernel $p_t(x_0,x_0)$. On the other hand, if $S$ is a $0$-codimensional submanifold, i.e. $S$ is an open, relatively compact and smooth set, and we choose $\mu=\mathds{1}_S\omega$ , then \eqref{eq:intro_hc_S} coincides with the usual relative heat content as defined in \cite{ARR-relative}. Therefore, a small-time asymptotic expansion of \eqref{eq:intro_hc_S} would include many cases of interest, ranging from the trace heat kernel asymptotics, see for example \cite{MR3207127,YHT-2}, to the results of contained in \cite{ARR-relative}.

Our attempt to compute the asymptotics of \eqref{eq:intro_hc_S} consists in building a suitable approximation of it, using smooth tubular neighborhoods of $S$. When $S$ is a smooth hypersurface, the existence of a tubular neighborhood is guaranteed assuming that there are no characteristic points, cf.\ \cite[Def. 2.4]{ARR-relative}. Therefore, as a first step, we introduce a definition of a non-characteristic submanifold, cf.\ Definition \ref{def:nonchar_submnf}, generalizing the classical one of non-characteristic hypersurface. Then, denoting by $\delta_S\colon M\rightarrow [0,+\infty)$ the distance function from $S$, see \eqref{eq:dist_S} for the precise definition, we prove the following.

\begin{thm}
\label{thm:intro_tub_neigh}
Let $M$ be a \sr manifold and $S\subset M$ be a compact smooth non-characteristic submanifold of codimension $k\geq 1$. Then, there exists $\smallpar>0$ such that, denoting by $\Spar=\{p\in M\mid 0<\deltaS<\smallpar\}$, the following conditions hold:
\begin{itemize}
\item[i)] $\deltaS\colon \Spar\rightarrow [0,\infty)$ is smooth and such that $\|\nabla\deltaS\|_g=1$;
\item[ii)] there exists a diffeomorphism $G\colon (0,\smallpar)\times\{\deltaS=\smallpar\}\rightarrow\Spar$ such that:
\begin{equation}
\deltaS(G(r,p))=r\qquad\text{and}\qquad G_*\partial_r=\nabla\deltaS.
\end{equation}
\end{itemize}
\end{thm}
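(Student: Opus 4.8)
The plan is to construct the \sr analogue of the normal exponential map out of the conormal bundle of $S$, to show it is a diffeomorphism onto a neighborhood of $S$ using the non-characteristic hypothesis together with the compactness of $S$, and then to identify the induced radial coordinate with $\deltaS$ via a Gauss-type lemma. Throughout, let $H\in C^\infty(T^*M)$ be the \sr Hamiltonian (fibrewise a degenerate quadratic form), $\vec H$ its Hamiltonian vector field, $\Phi_t=e^{t\vec H}$ its flow, $\pi\colon T^*M\to M$ the bundle projection, $\theta$ the tautological one-form, and $N^*S=\{(x,\lambda)\mid x\in S,\ \lambda|_{T_xS}=0\}$ the conormal bundle of $S$, an $n$-dimensional submanifold of $T^*M$ on which $\theta$ vanishes identically. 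I recall that $S$ being non-characteristic (Definition \ref{def:nonchar_submnf}) amounts to $H$ being strictly positive on $N^*S\setminus 0_S$, equivalently $T_xS+\D_x=T_xM$ at every $x\in S$.

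First I would define $\Psi(x,\lambda)=\pi(\Phi_1(x,\lambda))$ for $(x,\lambda)$ near the zero section $0_S\subset N^*S$; by fibrewise homogeneity of $H$ one has $\Psi(x,t\lambda)=\gamma_{(x,\lambda)}(t)$, the normal geodesic issued at $x\in S$ with covector $\lambda$. A direct computation shows that $d\Psi_{(x,0)}$ is the inclusion on $T_xS$ and the musical map $\sharp|_{N_x^*S}$ on the vertical space $N_x^*S$; if $\sharp\lambda\in T_xS$ for some $\lambda\in N_x^*S$, then $2H(\lambda)=\langle\lambda,\sharp\lambda\rangle=0$, whence $\lambda=0$ by the non-characteristic hypothesis, so $T_xS\oplus\sharp(N_x^*S)=T_xM$ and $d\Psi_{(x,0)}$ is invertible. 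Since $0_S\cong S$ is compact, the standard argument (local injectivity near each point of $0_S$, plus a contradiction obtained from convergent sequences) upgrades this to: $\Psi$ restricts to a diffeomorphism from an open neighborhood $\mathcal U$ of $0_S$ onto an open neighborhood $\mathcal V$ of $S$ in $M$. I then set $\rho=\|\Psi^{-1}(\cdot)\|$ on $\mathcal V$, the norm taken in the cometric; it is smooth on $\mathcal V\setminus S$ and satisfies $\rho(\gamma_{(x,\lambda)}(t))=t$ whenever $(x,\lambda)$ lies in the unit conormal bundle $B=N^*S\cap\{2H=1\}$, itself a compact $(n-1)$-manifold.

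Next I would prove the Gauss lemma: for small $t>0$ the flowed covector $\lambda(t):=\Phi_t(x,\lambda)$ is conormal to the hypersurface $\Sigma_t:=\rho^{-1}(t)=\pi(\Phi_t(B))$. This follows from $\Phi_t^*\theta=\theta+t\,dH$ (a consequence of Cartan's formula, $\iota_{\vec H}\theta=2H$ by Euler's identity, and $\mathcal L_{\vec H}(dH)=0$) together with $\theta|_B=0$ and $dH|_B=0$, which give $\theta|_{\Phi_t(B)}=0$; unwinding the definition of $\theta$, this says precisely that $\langle\lambda(t),d\pi(w)\rangle=0$ for every $w$ tangent to $\Phi_t(B)$, i.e.\ $\lambda(t)$ annihilates $T_{\gamma(t)}\Sigma_t$. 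Since $d\rho_{\gamma(t)}$ annihilates the same hyperplane and both $\lambda(t)$ and $d\rho_{\gamma(t)}$ take the value $1$ on the transverse vector $\dot\gamma(t)=\sharp\lambda(t)$, they coincide; hence $\|\nabla\rho\|_g^2=2H(d\rho)=2H(\lambda(t))=1$ on $\mathcal V\setminus S$. The inequality $\deltaS\le\rho$ is immediate from the unit-speed horizontal curve $t\mapsto\gamma_{(x,\lambda)}(t)$; conversely, integrating $|\tfrac{d}{ds}\rho(c(s))|\le\|\nabla\rho\|_g\,\|\dot c(s)\|_g=\|\dot c(s)\|_g$ along any horizontal curve $c$ from $S$ to $p$ staying in $\mathcal V$ yields $\rho(p)\le\ell(c)$, while curves leaving $\mathcal V$ are too long once $\smallpar$ is small; taking the infimum gives $\deltaS=\rho$ on $\Spar$, which is assertion (i). Finally, for $q\in\{\deltaS=\smallpar\}=\Sigma_{\smallpar}$ let $(x_q,\lambda_q)\in B$ be the unique unit conormal covector with $\gamma_{(x_q,\lambda_q)}(\smallpar)=q$, and set $G(r,q)=\gamma_{(x_q,\lambda_q)}(r)$; this is a diffeomorphism $(0,\smallpar)\times\{\deltaS=\smallpar\}\to\Spar$, being $\Psi$ composed with elementary diffeomorphisms, and by construction $\deltaS(G(r,q))=\rho(\gamma_{(x_q,\lambda_q)}(r))=r$ while $G_*\partial_r=\dot\gamma_{(x_q,\lambda_q)}(r)=\sharp(d\rho)=\nabla\deltaS$, which is assertion (ii).

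I expect the main obstacle to be the passage from "$d\Psi$ invertible along $0_S$" to "$\Psi$ a diffeomorphism of a whole neighborhood of $S$": the differential computation at the zero section is short and is exactly where the new non-characteristic condition enters, but making the radius of the tube uniform genuinely uses the compactness of $S$ and a careful neighborhood argument, and one must also check the Gauss lemma stays valid throughout the region where $\pi|_{\Phi_t(B)}$ is an embedding. I would also emphasize that abnormal minimizers play no role here: the decisive lower bound $\deltaS\ge\rho$ is extracted purely from the eikonal identity $\|\nabla\rho\|_g=1$, with no hypothesis on which geodesics from $S$ are length-minimizing.
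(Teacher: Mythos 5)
Your proposal is correct, and it shares the paper's skeleton: the exponential map defined on the annihilator (conormal) bundle of $S$, invertibility of its differential along the zero section via the non-characteristic condition (your computation of $d\Psi_{(x,0)}$, splitting as the inclusion of $T_xS$ plus the musical map on $N^*_xS$, is exactly the paper's Steps 1--2), and compactness of $S$ to obtain a uniform tube radius. Where you genuinely diverge is in identifying the radial coordinate with $\deltaS$ and proving the eikonal equation. The paper argues metrically: for $p$ in the tube it takes a length-minimizer from $S$ to $p$, invokes the transversality conditions \eqref{eq:trcondition}, and uses Lemma \ref{lem:abn_char} (an abnormal minimizer from $S$ must start at a characteristic point) to conclude that the minimizer is normal with initial covector in $\A S$, whence $E(U)=\{\deltaS<\smallpar\}$ and $\deltaS\circ E=\sqrt{2H}$; the eikonal identity and the flow map $G$ are then imported verbatim from \cite{FPR-sing-lapl}. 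You instead prove a symplectic Gauss lemma from $\Phi_t^*\theta=\theta+t\,dH$, identify $d\rho$ with the flowed covector, obtain $\|\nabla\rho\|_g=1$ first, and only then get $\deltaS=\rho$ by a two-sided comparison (normal geodesics give $\deltaS\le\rho$; integrating the eikonal inequality along arbitrary horizontal curves, plus an exit argument for curves leaving the tube, gives $\deltaS\ge\rho$). Your route avoids both the existence of minimizers attaining $\deltaS$ and any discussion of abnormal extremals or transversality conditions, so it is more self-contained; what it does not deliver for free is the paper's by-product that every point of $\Spar$ is reached by a unique, normal minimizing geodesic from $S$, which is precisely the content of Lemma \ref{lem:abn_char} combined with Step 4. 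The technical debts you flag yourself (uniformity of the tube via compactness, validity of the Gauss lemma on the region where the projection is an embedding, the ``curves leaving the tube are long'' step, continuity of $\rho$ up to $S$ for the integration, and shrinking $\smallpar$ so that $\{\deltaS=\smallpar\}$ lies in the diffeomorphic range before defining $G$) are all routine and do not hide a gap.
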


Theorem \ref{thm:intro_tub_neigh} generalizes the analogous result in \cite[Prop.\ 3.1]{FPR-sing-lapl}, and \cite[Lem.\ 23]{PRS-QC}, to submanifolds of any codimension and is a key tool to build both a canonical probability measure on $S$, cf.\ Lemma \ref{lem:induced_meas}, and the approximation of the relative heat content. Indeed, for any $\eps\leq\smallpar$, denoting by $S_\eps$ the tubular neighborhood of $S$, of radius $\eps$, we consider the (rescaled) relative heat content associated with $S_\eps$, namely
\begin{equation}
\label{eq:intro_approx}
H^{\eps}_S(t)=\frac{1}{\omega(S_\eps)^2}\int_{S_\eps}\int_{S_\eps}p_t(x,y)d\omega(x)d\omega(y),\qquad\forall\,t>0.
\end{equation}
We show that $H^\eps_S$ converges point-wise to $H_S$, as $\eps\to 0$, cf.\ Proposition \ref{prop:hc_approx}. Since now $H^\eps_S$ is the (rescaled) relative heat content associated with a non-characteristic open and bounded set, for any $\eps>0$, we can apply \cite[Thm. 1.1]{ARR-relative}, see also Theorem \ref{t:rel} for the statement, and try to deduce the asymptotics for the limit as $\eps\to 0$. 

Unfortunately, the point-wise convergence of $H^\eps_S$ is too weak to recover any information regarding the small-time asymptotic expansion of the limit. Indeed, by studying an explicit example, we show that the approximation procedure using \eqref{eq:intro_approx} fails to recover the asymptotic expansion of $H_S$. More precisely, we consider a closed simple curve in $\R^3$, equipped with the Euclidean metric and the Lebesgue measure. In this case, on the one hand, by a standard application of the Laplace method, it is possible to compute the asymptotic expansion of $H_S(t)$ as $t\to 0$ at any order: the coefficients appearing in the expansion depend on the curvature of the curve and its derivatives of \emph{any order}. On the other hand, following the approximation strategy described above, we obtain an expansion in the limit which can't possibly agree with the correct one since
\begin{itemize}
\item the only geometrical invariant appearing in the coefficients is the curvature of the curve \emph{without} its derivatives;
\item the orders of the expansion don't agree.
\end{itemize}

\subsection*{Structure of the paper} In Section \ref{sec:prel}, we recall the basic definitions of sub-Rieman\-nian geometry. In Section \ref{sec:tub_submnf}, we give the definition of a non-characteristic submanifold and we prove Theorem \ref{thm:intro_tub_neigh}. In Section \ref{sec:induced_meas}, we build the canonical probability measure on $S$, which is induced by the outer measure $\omega$. In Section \ref{sec:hc_submnf}, we introduce the definition of relative heat content for a submanifold $S$, we build its approximation and we prove its point-wise convergence. Finally, in Section \ref{sec:eucl_ex}, we show how the approximation fails to recover the asymptotic expansion of the relative heat content of $S$.

\subsection*{Acknowledgments.} This work was supported by the Grant ANR-18-CE40-0012 of the ANR, by the Project VINCI 2019 ref. c2-1212 and by ERC Starting Grant 2020, project GEOSUB (grant agreement No. 945655).

\section{Preliminaries}\label{sec:prel}
We recall some essential facts in \sr geometry, following \cite{ABB-srgeom}.

\subsection*{Sub-Riemannian geometry}
Let $M$ be a smooth, connected finite-dimensional manifold. A \sr structure on $M$ is defined by a set of $N$ global smooth vector fields $X_1,\ldots,X_N$, called a \emph{generating frame}. The generating frame defines a \emph{distribution} of subspaces of the tangent spaces at each point $x\in M$, given by
\begin{equation}
\label{eqn:gen_frame}
\D_x=\spn\{X_1(x),\ldots,X_N(x)\}\subseteq T_xM.
\end{equation}
We assume that the distribution is \emph{bracket-generating}, i.e. the Lie algebra of smooth vector fields generated by $X_1,\dots,X_N$, evaluated at the point $x$, coincides with $T_x M$, for all $x\in M$. The generating frame induces a norm on the distribution at $x$, namely
\begin{equation}
\label{eqn:induced_norm}
g_x(v,v)=\inf\left\{\sum_{i=1}^Nu_i^2\mid \sum_{i=1}^Nu_iX_i(x)=v\right\},\qquad\forall\,v\in\D_x,
\end{equation}
which, in turn, defines an inner product on $\D_x$ by polarization. We use the shorthand $\|\cdot\|_g$ for the corresponding norm. We say that $\gamma : [0,T] \to M$ is a \emph{horizontal curve}, if it is absolutely continuous and
\begin{equation}
\dot\gamma(t)\in\D_{\gamma(t)}, \qquad\text{for a.e.}\,t\in [0,T].
\end{equation}
This implies that there exists $u:[0,T]\to\R^N$, such that
\begin{equation}
\dot\gamma(t)=\sum_{i=1}^N u_i(t) X_i(\gamma(t)), \qquad \text{for a.e.}\, t \in [0,T].
\end{equation} 
Moreover, we require that $u\in L^2([0,T],\R^N)$. If $\gamma$ is a horizontal curve, then the map $t\mapsto \|\dot\gamma(t)\|_g$ is integrable on $[0,T]$, see \cite[Lemma 3.12]{ABB-srgeom}. We define the \emph{length} of a horizontal curve as follows:
\begin{equation}
\ell(\gamma) = \int_0^T \|\dot\gamma(t)\|_g dt.
\end{equation}
The \emph{\sr distance} is defined, for any $x,y\in M$, by
\begin{equation}\label{eq:infimo}
d_{\mathrm{SR}}(x,y) = \inf\{\ell(\gamma)\mid \gamma \text{ horizontal curve between $x$ and $y$} \}.
\end{equation}
By Chow-Rashevskii Theorem, the bracket-generating assumption ensures that the distance $d_{\mathrm{SR}}\colon M\times M\to\R$ is finite and continuous. Furthermore it induces the same topology as the manifold one.
\begin{rmk}
The above definition includes all classical constant-rank sub-Riemannian structures as in \cite{montgomerybook,Riffordbook} (where $\D$ is a vector distribution and $g$ a symmetric and positive tensor on $\D$), but also general rank-varying sub-Riemannian structures. The same \sr structure can arise from different generating families. 
\end{rmk}

\subsection*{Geodesics and Hamiltonian flow}


A \emph{geodesic} is a horizontal curve $\gamma :[0,T] \to M$, parametrized with constant speed, and such that any sufficiently short segment is length-minimizing. The \emph{sub-Riemannian Hamiltonian} is the smooth function $H : T^*M \to \R$, 
\begin{equation}
\label{eq:Hamiltonian}
H(\lambda)= \frac{1}{2}\sum_{i=1}^N \langle \lambda, X_i \rangle^2, \qquad \lambda \in T^*M,
\end{equation}
where $X_1,\ldots,X_N$ is a generating frame for the sub-Riemannian structure, and $\langle \lambda, \cdot \rangle $ denotes the action of covectors on vectors. The \emph{Hamiltonian vector field} $\vec H$ on $T^*M$ is then defined by $\varsigma(\cdot,\vec H)=dH$, where $\varsigma\in\Lambda^2(T^*M)$ is the canonical symplectic form.

Solutions $\lambda : [0,T] \to T^*M$ of the \emph{Hamilton equations}
\begin{equation}\label{eq:Hamiltoneqs}
\dot{\lambda}(t) = \vec{H}(\lambda(t)),
\end{equation}
are called \emph{normal extremals}. Their projections $\gamma(t) = \pi(\lambda(t))$ on $M$, where $\pi:T^*M\to M$ is the bundle projection, are locally length-minimizing horizontal curves parametrized with constant speed, and are called \emph{normal geodesics}. If $\gamma$ is a normal geodesic with normal extremal $\lambda$, then its speed is given by $\| \dot\gamma \|_g = \sqrt{2H(\lambda)}$. In particular
\begin{equation}
\label{eq:speed}
\ell(\gamma|_{[0,t]}) = t \sqrt{2H(\lambda(0))},\qquad \forall\, t\in[0,T].
\end{equation} 

There is another class of length-minimizing curves in sub-Riemannian geometry, called \emph{abnormal} or \emph{singular}. As for the normal case, to these curves it corresponds an extremal lift $\lambda(t)$ on $T^*M$, which however may not follow the Hamiltonian dynamics \eqref{eq:Hamiltoneqs}. 
Here we only observe that an abnormal extremal lift $\lambda(t)\in T^*M$ satisfies
\begin{equation}
\label{eq:abn}
\langle \lambda(t),\D_{\pi(\lambda(t))}\rangle=0\quad \text{and} \quad \lambda(t)\neq 0,\qquad \forall\, t\in[0,T] ,
\end{equation}
that is $H(\lambda(t))\equiv 0$. A geodesic may be abnormal and normal at the same time.

\subsection*{Length-minimizers to a submanifold.} Let $S\subset M$ be a closed embedded submani\-fold of codimension $k\geq 0$ and define the \sr distance from $S$:
\begin{equation}
\label{eq:dist_S}
\deltaS(p)=\inf\{d_{\mathrm{SR}}(q,p),q\in S\},\qquad \forall p\in M.
\end{equation}
Let $\gamma:[0,T]\to M$ be a horizontal curve, parametrized with constant speed, such that $\gamma(0)\in S$, $\gamma(T) = p \in M\setminus S$ and assume $\gamma$ is a minimizer for $\deltaS$, that is $\ell(\gamma)=\deltaS(p)$. In particular, $\gamma$ is a geodesic. Any corresponding normal or abnormal lift, say $\lambda :[0,T]\to T^*M$, must satisfy the transversality conditions, cf.\ \cite[Thm 12.13]{AS-GeometricControl},
\begin{equation}\label{eq:trcondition}
\langle \lambda(0), v\rangle=0,\qquad \forall \,v\in T_{\gamma(0)} S.
\end{equation}
Equivalently, the initial covector $\lambda(0)$ must belong to the annihilator bundle $\A(S) = \{\lambda \in T^*M \mid \langle \lambda, T_{\pi(\lambda)} S\rangle = 0\}$ of $S$.
%
%

\subsection*{The heat equation and the relative heat content for a domain}

Let $M$ be a \sr manifold and let $\omega$ be a smooth measure on $M$, i.e. defined by a positive tensor density.  The \emph{divergence} of a smooth vector field is defined by
\begin{equation}
\diverg (X)\omega=\mathcal{L}_X \omega, \qquad\forall\,X\in\Gamma(TM),
\end{equation}
where $\mathcal{L}_X$ denotes the Lie derivative in the direction of $X$. The \emph{horizontal gradient} of a function $f\in C^\infty(M)$, denoted by $\nabla f$, is defined as the horizontal vector field (i.e. tangent to the distribution at each point), such that
\begin{equation}
g_x(\nabla f(x),v)= v(f)(x),\qquad\forall\,v\in\D_x,
\end{equation} 
where $v$ acts as a derivation on $f$. In terms of a generating frame as in \eqref{eqn:gen_frame}, one has
\begin{equation}
\nabla f=\sum_{i=1}^NX_i(f)X_i,\qquad\forall\,f\in C^\infty(M).
\end{equation}

The \emph{sub-Laplacian} is the operator $\Delta= \diverg\circ\nabla$, acting on $C^\infty(M)$. Again, we may write its expression with respect to a generating frame \eqref{eqn:gen_frame}, obtaining
\begin{equation}
\label{eq:def_delta}
\Delta f=\sum_{i=1}^N\left\{X^2_i(f)+X_i(f)\diverg (X_i)\right\},\qquad\forall\,f\in C^\infty(M).
\end{equation}
We denote by $L^2(M,\omega)$, or simply by $L^2$, the space of real functions on $M$ which are square-integrable with respect to the measure $\omega$. Let $\Omega\subset M$ be an open relatively compact set with smooth boundary. This means that the closure $\bar{\Omega}$ is a compact manifold with smooth boundary. We consider the \emph{Cauchy problem for the heat equation} on $\Omega$, that is we look for functions $u$ such that
\begin{equation}\label{eq:cauchy_prob}
\begin{aligned}
\left(\partial_t -\Delta\right)u(t,x)  & =  0, & \qquad &\forall (t,x) \in (0,\infty) \times M, \\
u(0,\cdot) & 	=  \mathds{1}_\Omega,  & \qquad &\text{in }L^2(M,\omega),
\end{aligned}
\end{equation}
where $u(0,\cdot)$ is a shorthand notation for the $L^2$-limit of $u(t,x)$ as $t\to0$. Notice that $\Delta$ is symmetric with respect to the $L^2$-scalar product and negative, moreover, if $(M,d_\mathrm{SR})$ is complete as a metric space, it is essentially self-adjoint, see \cite{MR862049}. Thus, there exists a unique solution to \eqref{eq:cauchy_prob}, and it can be represented as
\begin{equation}
u(t,x)=e^{t\Delta}\mathds{1}_\Omega(x), \qquad\forall\,x\in M,\ t>0,
\end{equation}
where $e^{t\Delta}\colon L^2\rightarrow L^2$ denotes the heat semi-group, associated with $\Delta$. We remark that for all $\varphi \in L^2$, the function $e^{t\Delta} \varphi$ is smooth for all $(t,x) \in (0,\infty) \times M$, by hypoellipticity of the heat operator, see \cite{Hormander}. Furthermore, there exists a heat kernel associated with \eqref{eq:cauchy_prob}, i.e. a positive function $p_t(x,y)\in C^\infty((0,+\infty)\times M\times M)$ such that:
\begin{equation}
\label{eq:hk_def}
u(t,x)=\int_M p_t(x,y)\mathds{1}_{\Omega}(y)d\omega(y)=\int_\Omega p_t(x,y)d\omega(y).
\end{equation}

\begin{defn}[Relative heat content]
Let $u(t,x)$ be the solution to \eqref{eq:cauchy_prob}. We define the \emph{relative heat content}, associated with $\Omega$, as
\begin{equation}
\label{eq:rel_heat}
H_\Omega(t)=\int_\Omega u(t,x)d\omega(x), \qquad \forall\,t>0.
\end{equation}
\end{defn}

In \cite{ARR-relative}, the authors proved the existence of a small-time asymptotic expansion for $H_\Omega(t)$, provided that $\Omega$ is a non-characteristic domain. Precisely, denoting by
\begin{equation}
\delta_{\partial\Omega}(p)=\inf\{d_{\mathrm{SR}}(q,p),q\in \partial\Omega\},\qquad \forall p\in M,
\end{equation}
the distance from the boundary of $\Omega$ and by $\sigma$ the induced \sr measure on $\partial\Omega$ (i.e.\ the one whose density is $\sigma=|i_\nu\omega|_{\partial\Omega}$, where $\nu$ is the outward-pointing normal vector field to $\Omega$), we have the following result. See Definition \ref{def:nonchar_submnf} for the notion of a characteristic point.

\begin{thm}
\label{t:rel}
Let $M$ be a compact \sr manifold, equipped with a smooth measure $\omega$, and let $\Omega\subset M$ be an open subset whose boundary is smooth and has no characteristic points. Then, as $t\to 0$,
\begin{multline}
\label{eq:intro_exp}
H_\Omega(t) =\omega(\Omega)-\frac{1}{\sqrt\pi}\sigma(\partial\Omega)t^{1/2}
\\-\frac{1}{12\sqrt\pi}\int_{\partial\Omega}\left(2g(\nabla\delta_{\partial\Omega},\nabla(\Delta\delta_{\partial\Omega}))-(\Delta\delta_{\partial\Omega})^2\right) d\sigma t^{3/2}+o(t^2).
\end{multline} 
\end{thm}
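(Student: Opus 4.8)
\textbf{Proof sketch (following the scheme of \cite{ARR-relative}).}

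The plan is to reduce the quantity $\omega(\Omega)-H_\Omega(t)=\int_\Omega(1-u(t,\cdot))\,d\omega=\int_\Omega\int_{M\setminus\Omega}p_t(x,y)\,d\omega(y)\,d\omega(x)$ --- the total heat exchanged across $\partial\Omega$ --- to a one-dimensional problem in the distance to $\partial\Omega$. Since $M$ is compact, the sub-Riemannian heat kernel satisfies Gaussian-type off-diagonal bounds (Varadhan-type estimates, together with bounds for its derivatives), so the part of this integral coming from $\{\delta_{\partial\Omega}>\smallpar\}$ is $O(t^\infty)$; only a fixed tubular neighborhood $U=\{\delta_{\partial\Omega}<\smallpar\}$ contributes, and I may insert cut-offs supported in $U$. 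Because $\partial\Omega$ has no characteristic points, the signed distance $\delta:=\delta_{\partial\Omega}$ is smooth on $U$ with $\|\nabla\delta\|_g\equiv1$ (the codimension-one case of the tubular neighborhood statement, cf.\ \cite{FPR-sing-lapl}); in coordinates $(r,p)$ on $U$ with $r=\delta$, $G_*\partial_r=\nabla\delta$, $\Omega=\{r<0\}$, one has $\Delta F=F''(r)+(\Delta\delta)F'(r)$ for $F=F(r)$, and more generally $\Delta=\partial_r^2+(\Delta\delta)\partial_r+\mathcal{L}_r$ with $\mathcal{L}_r$ a second-order operator differentiating only in the $p$-directions. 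Finally, $\mathcal{L}_{\nabla\delta}\omega=(\Delta\delta)\omega$ forces $d\omega=\theta(r,p)\,dr\,d\sigma(p)$ with $\theta(0,\cdot)\equiv1$, $\partial_r\log\theta=(\Delta\delta)\circ G$, i.e.\ $\theta(r,p)=\exp(\int_0^r(\Delta\delta)(G(s,p))\,ds)$.

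With the parabolic rescaling $\xi=r/\sqrt t$, I look for an approximate solution $U_N(t,r,p)=\sum_{n=0}^N t^{n/2}\psi_n(\xi,p)$ with $\psi_0(-\infty)=1$, $\psi_0(+\infty)=0$ and $\psi_n(\pm\infty)=0$ for $n\ge1$, so that $U_N(t,\cdot)\to\mathds{1}_\Omega$ as $t\to0$. Substituting into $(\partial_t-\Delta)U_N=0$ and using the Taylor expansion $(\Delta\delta)(G(\sqrt t\,\xi,p))=\sum_{j\ge0}m_j(p)\,\xi^j\,t^{j/2}$ with $m_j=\tfrac1{j!}\partial_r^{\,j}(\Delta\delta)\big|_{r=0}$, collecting powers of $t$ yields a hierarchy of ODEs
\[
\psi_n''+\tfrac12\,\xi\,\psi_n'-\tfrac n2\,\psi_n=(\text{explicit source in }\psi_0,\dots,\psi_{n-1}\text{ and the }m_j),
\]
in which the tangential part $\mathcal{L}_r$ first appears at order $t^0$ applied to $\psi_0$; as $\psi_0$ is $p$-independent, it does not affect $\psi_0,\psi_1,\psi_2$. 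One solves explicitly $\psi_0(\xi)=\tfrac12\,\mathrm{erfc}(\xi/2)$, then $\psi_1=-\tfrac{m_0}{2\sqrt\pi}\,e^{-\xi^2/4}$ and $\psi_2=\tfrac{m_0^2-2m_1}{8\sqrt\pi}\,\xi\,e^{-\xi^2/4}$, these being the unique solutions decaying at $\pm\infty$ (the homogeneous solutions $\xi$, $\xi^2+2,\dots$ grow). A Duhamel argument, using $L^\infty$-contractivity of $e^{t\Delta}$ together with the heat kernel bounds to control both the interior residual $(\partial_t-\Delta)U_N=O(t^{(N-1)/2})$ and the (exponentially small) residual from the cut-offs, then gives $\int_\Omega(u-U_N)\,d\omega=O(t^{(N+1)/2})$, which suffices with $N=4$.

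It remains to integrate. By the above, $\omega(\Omega)-H_\Omega(t)=\sqrt t\int_{\partial\Omega}\int_0^\infty (U_N\,\theta)(\sqrt t\,\xi,p)\,d\xi\,d\sigma(p)+O(t^{(N+1)/2})$; expanding the product $U_N\theta$ in powers of $t$ and using the elementary moments $\int_0^\infty e^{-\xi^2/4}d\xi=\sqrt\pi$, $\int_0^\infty\xi e^{-\xi^2/4}d\xi=2$, $\int_0^\infty\mathrm{erfc}(\xi/2)\,d\xi=\tfrac2{\sqrt\pi}$, $\int_0^\infty\xi\,\mathrm{erfc}(\xi/2)\,d\xi=1$, $\int_0^\infty\xi^2\,\mathrm{erfc}(\xi/2)\,d\xi=\tfrac8{3\sqrt\pi}$, one reads off the $t^{1/2}$-coefficient $\tfrac1{\sqrt\pi}\sigma(\partial\Omega)$, a vanishing $t^1$-coefficient (the two contributions to it cancel), and the $t^{3/2}$-coefficient $\tfrac1{12\sqrt\pi}\int_{\partial\Omega}(2m_1-m_0^2)\,d\sigma$. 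Since $m_0=\Delta\delta_{\partial\Omega}\big|_{\partial\Omega}$ and $m_1=g(\nabla\delta_{\partial\Omega},\nabla(\Delta\delta_{\partial\Omega}))\big|_{\partial\Omega}$, this is exactly \eqref{eq:intro_exp}. (That the $t^1$-term must vanish can also be seen a priori: $\omega(\Omega)-H_\Omega(t)$ is invariant under $\Omega\leftrightarrow M\setminus\Omega$, which reverses the co-orientation of $\partial\Omega$ and hence the sign of $\Delta\delta_{\partial\Omega}$, so no coefficient may be the boundary integral of an odd polynomial in $\Delta\delta_{\partial\Omega}$ and its normal derivatives; the combination $2m_1-m_0^2$, by contrast, is even.)

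The main obstacle is the error analysis in the second paragraph: justifying that $U_N$ approximates $u$ to the stated order. Because $\Delta$ is only hypoelliptic and $u$ develops a boundary layer of width $\sqrt t$ near $\partial\Omega$, the usual elliptic/parabolic estimates do not apply directly; one has to work with anisotropic norms adapted to the parabolic scaling, combine them with the sub-Gaussian heat kernel bounds, glue the tubular model to the rest of $M$ through the cut-offs, and keep all constants uniform in $p\in\partial\Omega$. The geometric input that makes the whole construction possible --- smoothness of $\delta_{\partial\Omega}$ up to $\partial\Omega$ --- is precisely what the non-characteristic hypothesis provides.
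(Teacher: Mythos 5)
Theorem \ref{t:rel} is not proved in this paper at all: it is imported verbatim from \cite[Thm.\ 1.1]{ARR-relative}, so your attempt can only be measured against that reference, whose strategy (reduction to the exterior integral, localization near $\partial\Omega$, one-dimensional boundary-layer profiles in the rescaled distance variable, Duhamel-type error control) your sketch reproduces in spirit. The formal part of what you write is correct, and I checked it: $\omega(\Omega)-H_\Omega(t)=\int_{M\setminus\Omega}u(t,\cdot)\,d\omega$ (stochastic completeness holds on compact $M$); the decomposition $\Delta=\partial_r^2+(\Delta\delta)\partial_r+\mathcal{L}_r$ with $\mathcal{L}_r$ involving only $p$-derivatives is true in the coordinates of the double-sided tubular neighborhood (Theorem \ref{thm:double_tub_neigh}), though it deserves a line of justification: the coefficient of the mixed term $\partial_r\partial_{p_j}$ is $2\sum_i X_i(\delta)X_i(p_j)=2\,g(\nabla\delta,\nabla p_j)=2\,\partial_r p_j=0$ precisely because $G_*\partial_r=\nabla\delta$. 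Your profiles $\psi_0,\psi_1,\psi_2$, the Gaussian moments, the cancellation of the $t$-term and the resulting $t^{3/2}$-coefficient $\frac{1}{12\sqrt\pi}\int_{\partial\Omega}(2m_1-m_0^2)\,d\sigma$ with $m_0=\Delta\delta_{\partial\Omega}\rvert_{\partial\Omega}$, $m_1=g(\nabla\delta_{\partial\Omega},\nabla(\Delta\delta_{\partial\Omega}))\rvert_{\partial\Omega}$ all come out right, and the sign conventions are harmless since $2m_1-m_0^2$ is invariant under $\delta\mapsto-\delta$.

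There remain two genuine gaps. First, the error analysis is only announced, and you say so yourself; but in the sub-Riemannian setting this is not a routine appendix, it is the actual content of the proof in \cite{ARR-relative}: one needs the off-diagonal heat-kernel bounds, the gluing of the tubular model to $M$ through cut-offs, and estimates uniform in $p\in\partial\Omega$ for a solution that is only known to be smooth by hypoellipticity, with no boundary-value problem to lean on. A plan that names these difficulties does not discharge them. Second, the remainder $o(t^2)$ in \eqref{eq:intro_exp} asserts in particular that there is no $t^2$-term; with your scheme this requires constructing $\psi_3,\psi_4$, proving the corresponding error bound, and checking that the order-$t^2$ coefficient integrates to zero over $\partial\Omega$. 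Your parity argument under $\Omega\leftrightarrow M\setminus\Omega$ is a good consistency heuristic, but it presupposes that an expansion at that order exists and has coefficients of the claimed polynomial form in $m_0,m_1,\dots$, which is exactly what is still to be proved. So: right approach, correct formal computation through order $t^{3/2}$, but the analytic core and the last order of the statement are missing.
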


\begin{rmk}
The compactness assumption in Theorem \ref{t:rel} is technical and can be relaxed by requiring, instead, global doubling of the measure and a global Poincar\'e inequality. We refer to \cite{ARR-relative} for more details.
\end{rmk}


\section{Tubular neighborhood for submanifolds}
\label{sec:tub_submnf}


\begin{defn}[Non-characteristic submanifold]
\label{def:nonchar_submnf}
Let $M$ be a \sr ma\-nifold and let $S\subset M$ be a smooth submanifold of codimension $k\geq 0$. We say that a point $q\in S$ is \emph{non-characteristic} if 
\begin{equation}
\label{eq:nonchar_cond}
\D_q+T_qS=T_qM.
\end{equation}
We say that $S$ is a \emph{non-characteristic submanifold} if \eqref{eq:nonchar_cond} holds for any point $q\in S$.
\end{defn}

\begin{rmk}
Notice that Definition \ref{def:nonchar_submnf} includes the usual one for hypersurfaces, indeed if $S\subset M$ is a submanifold of codimension $1$, it is easy to check that
\begin{equation}
\D_q\subset T_qS\qquad\Leftrightarrow\qquad \D_q+T_qS\subsetneq T_qM.
\end{equation}
\end{rmk}

Under the assumption of non-characteristic submanifold, the distance from $S$, $\deltaS$ defined in \eqref{eq:dist_S}, is smooth and it allows to build smooth tubular neighborhoods of $S$.

\begin{thm}
\label{thm:tub_neigh}
Let $M$ be a \sr manifold and $S\subset M$ be a compact smooth non-characteristic submanifold of codimension $k\geq 1$. Then, there exists $\smallpar>0$ such that, denoting by $\Spar=\{p\in M\mid 0<\deltaS<\smallpar\}$, the following conditions hold:
\begin{itemize}
\item[i)] $\deltaS\colon \Spar\rightarrow [0,\infty)$ is smooth and such that $\|\nabla\deltaS\|_g=1$;
\item[ii)] there exists a diffeomorphism $G\colon (0,\smallpar)\times\{\deltaS=\smallpar\}\rightarrow\Spar$ such that:
\begin{equation}
\deltaS(G(r,p))=r\qquad\text{and}\qquad G_*\partial_r=\nabla\deltaS.
\end{equation}
\end{itemize}
\end{thm}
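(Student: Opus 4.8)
The plan is to reduce the statement to the structure of the sub-Riemannian distance near a non-characteristic submanifold, following the strategy used for hypersurfaces in \cite{FPR-sing-lapl, PRS-QC}. The main point is to show that $\deltaS$ is smooth on a punctured tubular neighborhood of $S$ and that its horizontal gradient flow foliates that neighborhood. First I would set up the exponential-type map from the annihilator bundle: by the transversality conditions \eqref{eq:trcondition}, every length-minimizer from $S$ to a nearby point $p$ lifts to a normal extremal with initial covector $\lambda(0)\in\A(S)$. The non-characteristic hypothesis \eqref{eq:nonchar_cond} is exactly what guarantees that the fiber $\A_q(S)\cap\{2H=1\}$ is, for each $q\in S$, a nonempty affine-type subset transverse to $S$; more precisely $\A(S)$ has rank $k$ over $S$, and the non-characteristic condition ensures $H$ restricted to each fiber is a nondegenerate positive-definite quadratic form in the relevant directions, so $\{2H=1\}\cap\A(S)$ is a smooth sphere bundle of rank $k-1$ over $S$ (a pair of points when $k=1$). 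I would then define the \sr exponential map $E\colon \A(S)\cap\{2H=1\}\times[0,\smallpar)\to M$, $E(\lambda,r)=\pi(e^{r\vec H}(\lambda))$, and compute its differential at $r=0$.

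Next I would show $E$ is a local diffeomorphism for small $r>0$ onto a neighborhood of $S$ minus $S$ itself. The key computation is that $dE$ at a point with $r=0$ is an isomorphism onto $T_qM$: the $\partial_r$ direction maps to a horizontal vector $v$ with $g_q(v,\cdot)=\lambda(0)|_{\D_q}$, which by \eqref{eq:nonchar_cond} together with the annihilator condition \eqref{eq:trcondition} is transverse to $T_qS$; the directions tangent to $S$ and to the fiber sphere fill out the remaining $\dim M-1$ dimensions. Here I expect the main technical obstacle: unlike the Riemannian case, one must rule out abnormal minimizers and, more seriously, one must prove that for $p$ close enough to $S$ the minimizer is \emph{unique} and \emph{strictly normal}, so that $\deltaS$ is single-valued and smooth — this is where compactness of $S$ enters, allowing a uniform $\smallpar$, and where one needs an argument that no conjugate points or cut locus phenomena occur before time $\smallpar$. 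The standard route is: (a) $E$ restricted to $r=0$ is the inclusion of (a copy of) $S$; (b) $dE$ is invertible along $\{r=0\}$; (c) hence $E$ is a diffeomorphism from a neighborhood of $\{r=0\}$ in the source onto a neighborhood of $S$ in $M$; (d) a separate compactness/continuity argument shows every point in a genuine $\deltaS$-tube has its minimizer captured by this $E$, so $\deltaS\circ E(\lambda,r)=r$ and in particular $\deltaS$ is smooth with $\|\nabla\deltaS\|_g=\sqrt{2H}=1$ on $\Spar$.

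Finally, for part ii), I would produce $G$ by reparametrizing the normal flow by level sets of $\deltaS$ rather than by the fiber over $S$. Concretely, the gradient flow of $\deltaS$ with respect to $g$ is exactly the projection of the Hamiltonian flow $e^{r\vec H}$ restricted to the covectors $d\deltaS$, because $\|\nabla\deltaS\|_g\equiv 1$ forces $\nabla\deltaS$ to be the velocity of unit-speed normal geodesics (the eikonal equation $2H(d\deltaS)=1$ holds, and differentiating gives that the characteristics of the eikonal equation are normal geodesics). Integrating $\nabla\deltaS$ starting from the level set $\{\deltaS=\smallpar\}$ backwards and forwards gives a flow $\Phi_s$; setting $G(r,p)=\Phi_{r-\smallpar}(p)$ for $p\in\{\deltaS=\smallpar\}$ yields $\deltaS(G(r,p))=r$ and $G_*\partial_r=\nabla\deltaS$ by construction. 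Smoothness and the diffeomorphism property of $G$ follow from smoothness of $\deltaS$ on $\Spar$ (part i) plus the fact that $\nabla\deltaS$ is nonvanishing there, so each integral curve meets each level set exactly once; shrinking $\smallpar$ if necessary keeps everything inside the region where part i) holds. The delicate step remains the global injectivity in the reduction (d) above; everything after that is the routine ODE-foliation argument.
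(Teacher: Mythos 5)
Your overall strategy---the exponential map from the annihilator bundle, the transversality condition \eqref{eq:trcondition}, ruling out abnormal minimizers via the non-characteristic hypothesis (this is exactly the paper's Lemma \ref{lem:abn_char}), compactness of $S$ to get a uniform $\smallpar$, surjectivity of the exponential map onto the tube, and part ii) via the eikonal equation and the flow of $\nabla\deltaS$---is the same as the paper's. However, there is a genuine gap in your central computation. You parametrize the exponential map in polar form, $E\colon\big(\A S\cap\{2H=1\}\big)\times[0,\smallpar)\to M$, $E(\lambda,r)=\pi\circ e^{r\vec H}(\lambda)$, and claim that $d E$ at a point with $r=0$ is an isomorphism onto $T_qM$ because the directions tangent to $S$ and to the fiber sphere ``fill out the remaining $\dim M-1$ dimensions''. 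This fails whenever $k\geq 2$: since $E(\lambda,0)=\pi(\lambda)$, the map at $r=0$ is constant along each sphere fiber, so the $k-1$ sphere directions lie in the kernel of $dE$ and its rank at $r=0$ is at most $(n-k)+1<n$. This is the usual degeneracy of polar coordinates at the origin; it is invisible in the hypersurface case $k=1$ treated in \cite{FPR-sing-lapl,PRS-QC} (where the fiber sphere is $0$-dimensional), but the whole point of the theorem is arbitrary codimension. Without invertibility at $r=0$ you get neither local injectivity near $S$ nor a uniform $\smallpar$, and your subsequent steps (c)--(d) have nothing to rest on.

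The fix, which is what the paper does, is to work on the vector bundle $\A S$ near its zero section, with $E(\lambda)=\pi\circ e^{\vec H}(\lambda)$ defined on $U=\{\lambda\in\A S\mid\sqrt{2H(\lambda)}<\smallpar\}$. At a point $(q,0)$ of the zero section the fiber directions map to $v_\lambda=\pi_*\vec H(\lambda)$, and the non-characteristic condition gives both that $\lambda\mapsto v_\lambda$ is injective on $\A_qS$ (if $v_\lambda=0$ then $2H(\lambda)=0$, so $\lambda$ annihilates $\D_q$ and $T_qS$, hence $\lambda=0$) and that $\spn\{v_\lambda\}$ meets $T_qS$ trivially (if $v_\lambda\in T_qS$ then $\|v_\lambda\|_g^2=2H(\lambda)=\langle\lambda,v_\lambda\rangle=0$); hence $d_{(q,0)}E$ is invertible along the zero section, and the inverse function theorem together with the compactness argument of \cite[Prop.\ 3.1]{FPR-sing-lapl} yields a uniform $\smallpar$ on which $E\rvert_U$ is a diffeomorphism. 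With this replacement the rest of your outline---capturing every minimizer from a point of $\Spar$ by a normal extremal with initial covector in $U$ (your worry about cut/conjugate points is subsumed by the injectivity of $E$ on $U$, so no separate argument is needed), the identity $\deltaS(E(q,\lambda))=\sqrt{2H(\lambda)}$, smoothness and $\|\nabla\deltaS\|_g=1$, and the construction of $G$ by integrating $\nabla\deltaS$ from the level set $\{\deltaS=\smallpar\}$---coincides with the paper's Steps 3--5.
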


Before giving the proof of the theorem, we need a preliminary lemma, which can be regarded as a partial generalization of \cite[Prop. 2.7]{FPR-sing-lapl}.

\begin{lem}
\label{lem:abn_char}
Let $M$ be a \sr manifold and $S\subset M$ be a smooth submanifold of codimension $k\geq 1$. Let $\gamma\colon[0,1]\rightarrow M$ be a minimizing geodesic such that 
\begin{equation}
\gamma(0)\in S,\qquad\gamma(1)=p\in M\setminus S,\qquad\deltaS(p)=\ell(\gamma).
\end{equation}
If $\gamma$ is an abnormal geodesic, then $\gamma(0)$ is a characteristic point of $S$.
\end{lem}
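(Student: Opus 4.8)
The plan is to argue by contradiction: suppose $\gamma$ is an abnormal geodesic realizing $\deltaS(p)$ with $\gamma(0) = q \in S$, but that $q$ is a \emph{non-characteristic} point, i.e. $\D_q + T_q S = T_q M$. We will derive a contradiction with the transversality condition \eqref{eq:trcondition} together with the abnormality condition \eqref{eq:abn}. Let $\lambda \colon [0,1] \to T^*M$ be an abnormal extremal lift of $\gamma$. By \eqref{eq:abn} we have $\langle \lambda(0), \D_q \rangle = 0$, and since $\gamma$ is a minimizer for $\deltaS$, the transversality condition forces $\langle \lambda(0), T_q S\rangle = 0$ as well. Hence $\lambda(0)$ annihilates $\D_q + T_q S$. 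But the non-characteristic hypothesis says $\D_q + T_q S = T_q M$, so $\lambda(0)$ annihilates all of $T_q M$, meaning $\lambda(0) = 0$. This contradicts the second part of \eqref{eq:abn}, namely that an abnormal extremal lift is nowhere vanishing. Therefore $q$ must be characteristic.

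The only genuine subtlety is making sure the transversality condition \eqref{eq:trcondition} is legitimately available for an \emph{abnormal} minimizer to a submanifold, not merely for a normal one. The excerpt already asserts (citing \cite[Thm 12.13]{AS-GeometricControl}) that any normal \emph{or} abnormal lift $\lambda$ of a length-minimizer from $S$ to $p$ satisfies $\langle \lambda(0), v\rangle = 0$ for all $v \in T_{\gamma(0)}S$, equivalently $\lambda(0) \in \A(S)$. So I would simply invoke that statement directly. One should be slightly careful about parametrization: $\gamma$ is parametrized on $[0,1]$ with constant speed and is length-minimizing to $S$, so it is in particular a geodesic and a Pontryagin extremal for the endpoint-constrained optimal control problem with initial manifold $S$; the transversality conditions of the Pontryagin maximum principle then apply to every extremal lift, abnormal included. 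This is exactly the content cited, so no new work is needed here.

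I expect essentially no serious obstacle: the argument is a three-line linear-algebra contradiction once the transversality condition for abnormal minimizers is in hand. The only point requiring mild care is to state cleanly that $\A(S)_q = (\D_q + T_q S)^\perp$ inside $T_q^* M$ — i.e. a covector kills $T_q S$ and kills $\D_q$ if and only if it kills their sum — and that this annihilator is trivial precisely when $q$ is non-characteristic in the sense of Definition \ref{def:nonchar_submnf}. One might also remark, for completeness, that this explains the restriction to codimension $k \ge 1$: for $k = 0$ the set $S$ is open, $T_q S = T_q M$ trivially, every interior point is non-characteristic, and the statement is vacuous since there is no minimizing geodesic from $S$ to a point of $M \setminus S$ at positive distance hitting $S$ transversally in a meaningful way. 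I would present the contradiction argument as the body of the proof and fold the annihilator identity in as a one-sentence observation.
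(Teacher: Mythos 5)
Your proposal is correct and follows essentially the same route as the paper: an abnormal lift's initial covector annihilates both $\D_q$ and (by the transversality condition \eqref{eq:trcondition}, valid for abnormal minimizers as well) $T_qS$, and being nonzero it witnesses the failure of $\D_q+T_qS=T_qM$. The only cosmetic difference is that you phrase it as a contradiction with the non-characteristic assumption, while the paper concludes directly that \eqref{eq:nonchar_cond} fails at $\gamma(0)$.
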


\begin{proof}
Let $\lambda\colon [0,1]\rightarrow T^*M$ be an abnormal lift of $\gamma$: this means in particular that $\pi(\lambda(t))=\gamma(t)$ and
\begin{equation}
\label{eq:abn_lift}
\langle \lambda(0),\D_{\gamma(0)}\rangle=0, \qquad\text{with }\lambda(0)\neq 0,
\end{equation}
where $\langle\cdot,\cdot\rangle$ denotes the dual coupling. Moreover, since $\gamma$ is a minimizing geodesic, any lift must necessarily satisfy the transversality condition \eqref{eq:trcondition}. Thus, since $\lambda(0)\neq 0$, conditions \eqref{eq:abn_lift}, \eqref{eq:trcondition} imply that \eqref{eq:nonchar_cond} fails at $q=\gamma(0)$. 
\end{proof}

\begin{proof}[Proof of Theorem \ref{thm:tub_neigh}]
Let us consider the annihilator bundle of $S$, $\A S$, namely the vector bundle of rank $k$, whose fibers are given by 
\begin{equation}
\A_qS=\{\lambda\in T^*_qM\mid\langle\lambda,T_qS\rangle=0\},\qquad\forall q\in S.
\end{equation}
At a point $q\in S$, let us fix a basis of the fiber $\A_qS$, say $\{\lambda_1,\ldots,\lambda_k\}$ and define, for any $j=1,\ldots,k$, the element $v_j\in \D_q$ dual to $\lambda_i$ via the Hamiltonian $H$, i.e.
\begin{equation}
\label{eq:basis_v}
v_j=\pi_*\vec{H}(\lambda_j)=\sum_{i=1}^N\langle \lambda_j,X_i(q)\rangle X_i(q)\qquad j=1,\ldots,k.
\end{equation}
\emph{(Step 1)} If $q$ is non-characteristic, then the set $\{v_1,\ldots,v_k\}$ is linearly independent. 

\noindent Indeed assume there exists constants $\alpha_i$ for $i=1,\ldots,k$, such that $\sum_{i=1}^k\alpha_iv_i=0$. Then, 
\begin{equation}
\label{eq:lin_ind}
0 =\sum_{j=1}^k\alpha_jv_j=\sum_{j=1}^k\alpha_j\sum_{i=1}^N\langle \lambda_j,X_i(q)\rangle X_i(q)=\sum_{i=1}^N\langle\sum_{j=1}^k\alpha_j\lambda_j,X_i(q)\rangle X_i(q)=\pi_*\vec{H}(\lambda),
\end{equation}
having set $\lambda =\sum_{j=1}^k\alpha_j\lambda_j\in\A_qS$. Notice that, by the Lagrange multiplier rule, denoting by $v_\lambda=\pi_*\vec{H}(\lambda)$, for any $\lambda\in T_q^*M$, we have 
\begin{equation}
\label{eq:norm_v}
\|v_\lambda\|^2_g=\inf\left\{\sum_{i=1}^Nu_i^2\mid v_\lambda=\sum_{i=1}^Nu_iX_i(q)\right\}=\sum_{i=1}^N\langle \lambda,X_i(q)\rangle^2=2H(\lambda).
\end{equation}
Therefore, \eqref{eq:lin_ind} implies that $\|\pi_*\vec{H}(\lambda)\|^2_g=2H(\lambda)=0$, or equivalently:
\begin{equation}
\label{eq:abn_lambda}
\langle \lambda,\D_q\rangle= 0.
\end{equation}
Since $\lambda\in\A_qS$ and $q$ is non-characteristic, by \eqref{eq:abn_lambda}, we deduce that $\lambda=0$. Thus:
\begin{equation}
0=\lambda=\sum_{j=1}^k\alpha_j\lambda_j\qquad\Rightarrow\qquad\alpha_j=0,\quad\text{for any }j=1,\ldots,k,
\end{equation}
since $\{\lambda_1,\ldots,\lambda_k\}$ was a basis of the fiber of $\A S$. This concludes the proof of the first step. 
Define now the \sr exponential map from $S$, i.e. the map
\begin{equation}
E\colon D\cap\A S\rightarrow M;\qquad E(\lambda)=\pi\circ e^{\vec H}(\lambda),
\end{equation}
where $D\subset T^*M$ is the open set where the flow of $\vec H$ is defined up to time $1$. Consider also the zero section of the annihilator bundle, namely
\begin{equation}
i\colon S\rightarrow \A S; \qquad i(q)=(q,0)\in \A_qS.
\end{equation}
\emph{(Step 2)} $E$ is a local diffeomorphism at points of $i(S)$.

\noindent To prove the claim, we consider a point $(q,0)\in i(S)$ and verify that $d_{(q,0)}E$ is invertible. Identifying $T_{(q,0)}(D\cap\A S)\cong T_qS\oplus \A_qS$, we have, on the one hand $E\circ i=Id_S$, therefore for a vector $v=(v,0)\in  T_qS\oplus \A_qS$,  
\begin{equation}
d_{(q,0)}E(v)=\left.\frac{d}{dt}\right\rvert_{t=0}E(\lambda(t))=\left.\frac{d}{dt}\right\rvert_{t=0}E\circ i(\gamma(t))=\left.\frac{d}{dt}\right\rvert_{t=0}\gamma(t)=v,
\end{equation}
since $\lambda(t)=(\gamma(t),0)$, with $\gamma\colon(-\eps,\eps)\rightarrow S$, such that $\gamma(0)=q$ and $\dot\gamma(0)=v$. On the other hand, take an element $\lambda=(0,\lambda)\in  T_qS\oplus \A_qS$, then by definition, we obtain
\begin{equation}
d_{(q,0)}E(\lambda)=\left.\frac{d}{dt}\right\rvert_{t=0}E(q,t\lambda)=\left.\frac{d}{dt}\right\rvert_{t=0}E(q,t\lambda)=\left.\frac{d}{dt}\right\rvert_{t=0}\pi\circ e^{t\vec H}(\lambda)=\pi_*\vec H(\lambda)=v_\lambda.
\end{equation}
Thus, choosing any basis for $T_qS$ and the basis $\{\lambda_1,\ldots,\lambda_k\}$ for $\A_qS$, as before, we may write the $n\times n$ matrix representing the differential of $E$ as
\begin{equation}
d_{(q,0)}E=
\begin{pNiceArray}{cc|c}[margin]
\Block{2-2}{Id_{n-k}} & &  \\
&  & v_1,\ldots,v_k\\
\qquad \bm{0} & &
\end{pNiceArray},
\end{equation}
where the vectors $v_j$ are defined in \eqref{eq:basis_v}. Since, by the previous step, the set $\{v_1,\ldots,v_k\}$ is linearly independent in $\D_q$, we conclude that $dE$ is invertible at $i(S)$. 

\noindent\emph{(Step 3)}  There exists $U\subset D\cap \A S$, such that $E\rvert_U$ is a diffeomorphism on its image. Moreover, $U$ can be chosen of the form:
\begin{equation}
\label{eq:setU}
U=\{\lambda\in \A S\mid \sqrt{2H(\lambda)}<\smallpar\}, \qquad\text{for some }\smallpar>0.
\end{equation}
The proof of this step follows verbatim what has been done in \cite[Prop.\ 3.1]{FPR-sing-lapl}, cf.\ also \cite[Lem.\ 23]{PRS-QC}, once we have verified that $\sqrt{2H(\cdot)}$ is a fiber-wise norm on the annihilator bundle. Since $H$ is quadratic on fibers, it immediately follows that $\sqrt{2H(\cdot)}$ is positive, 1-homogeneous and sub-additive. We are left to prove that, for $\lambda\in\A_qS$,
\begin{equation}
\sqrt{2H(\lambda)}=0\qquad\Leftrightarrow\qquad \lambda=0.
\end{equation}
As already remarked in \eqref{eq:abn_lambda}, an element $\lambda\in\A_qS$, such that $\sqrt{2H((q,\lambda))}=0$, annihilates both the distribution and $T_qS$, thus, being $q$ non-characteristic, $\lambda=0$.

\noindent\emph{(Step 4)} $E(U)=\{p\in M\mid \deltaS(p)<\smallpar\}=\Spar\cup S$ and, for elements $(q,\lambda)\in U$ we have $\deltaS(E(q,\lambda))=\sqrt{2H(\lambda)}$. In particular, $\deltaS\in C^\infty(\Spar)$. 

\noindent Firstly, we recall that, for an element $\lambda\in U$, the length of the curve
\begin{equation}
[0,1]\ni t\mapsto\pi\circ e^{t\vec H}(\lambda)\in M
\end{equation}
is equal to $\sqrt{2H(\lambda)}<\smallpar$, as one can check using \eqref{eq:norm_v}. Thus, $E(U)\subset\Spar\cup S$. Secondly, we prove the opposite inclusion: up to restricting $\smallpar$, we may assume that $\Spar\subset K$, for a compact set $K\subset M$. Therefore, for an element $p\in\Spar$, there exists a minimizing geodesic $\gamma\colon[0,1]\rightarrow M$ such that
\begin{equation}
\gamma(0)=q\in S,\qquad\gamma(1)=p\qquad\text{and}\quad\ell(\gamma)=\deltaS(p).
\end{equation}
Applying Lemma \ref{lem:abn_char}, we deduce that $\gamma$ is not an abnormal geodesic, meaning that there exists a unique normal lift for $\gamma$, with initial covector given by $\lambda\in T^*_qM$, which implies 
\begin{equation}
\gamma(t)=\pi\circ e^{t\vec H}(\lambda),
\end{equation}
and in particular, $E(q,\lambda)=p$. Moreover, $\lambda\in U$ as, by optimality, it satisfies the transversality condition \eqref{eq:trcondition}, and also 
\begin{equation}
\label{eq:length_normal}
\ell(\gamma)=\sqrt{2H(\lambda)}<\smallpar, 
\end{equation}
being $p\in\Spar$. Finally, we conclude that $p\in E(U)$ and $\deltaS(E(q,\lambda))=\sqrt{2H(\lambda)}$, by \eqref{eq:length_normal}. Since $\sqrt{2H(\cdot)}$ is smooth, as long as $H(\lambda)\neq 0$, we also have that $\deltaS$ is smooth on the set $E(U\setminus i(S))=\Spar$. 

\noindent\emph{(Step 5)} There exists a diffeomorphism $G\colon (0,\smallpar)\times\{\deltaS=\smallpar\}\rightarrow\Spar$ satisfying item $(ii)$ of the statement. Moreover, $\|\nabla\deltaS\|_g=1$ in $\Spar$. 

\noindent Once again, this part of the proof follows verbatim \cite[Prop.\ 3.1]{FPR-sing-lapl}.
\end{proof}

\begin{rmk}
Consider the set $U=\A S\cap\{\sqrt{2H(\cdot)}<\smallpar\}$ defined in \eqref{eq:setU}. What we proved in the previous Theorem is that $E$ defines a diffeomorphism between $U$ and $\Spar\cup S$. In particular, choosing a local trivialization of the annihilator bundle, this means that 
\begin{equation}
\label{eq:loc_diffeo}
\Spar\cup S\cong\A S\cap\left\{\sqrt{2H(\cdot)}<\smallpar\right\}\underset{locally}{\cong} S\times B^H_{\smallpar}(0),
\end{equation}
where $B^H_{\smallpar}(0)$ denotes the ball of radius $\smallpar$, centered at the origin of the Euclidean space $\left(\R^k,\sqrt{2H(\cdot)}\right)$. Of course, in general, the annihilator bundle will not be globally trivializable, however, this is the case when $S$ is the boundary of an open set and we are able to extend \eqref{eq:loc_diffeo} to the whole submanifold. 
\end{rmk}

Whenever $S$ is a boundary of an open set, we can refine Theorem \ref{thm:tub_neigh} building a double-sided tubular neighborhood of $S$, in which we are able to distinguish the inside and the outside of the open set. This is done using the signed distance function. We recall here its definition.

\begin{defn}[Signed distance]
Let $M$ be a \sr manifold and $\Omega\subset M$ be an open subset. Define $\delta\colon M\rightarrow \R$ to be the \emph{signed distance function} from $\partial\Omega$, i.e.
\begin{equation}
\delta(p)=\begin{cases}
\delta_{\partial\Omega}(p) &p\in \Omega,\\
-\delta_{\partial\Omega}(p) &p\in M\setminus\Omega,  
\end{cases}
\end{equation}
where $\delta_{\partial\Omega}\colon M\rightarrow [0,+\infty)$ denotes the usual distance function from the boundary of $\Omega$. 
\end{defn}

\begin{thm}[Double-sided tubular neighborhood]
\label{thm:double_tub_neigh}
Let $M$ be a \sr manifold and $\Omega\subset M$ be an open, relatively compact subset, whose boundary is smooth and has no characteristic points. Denote by $\Omega_{-\smallpar}^{\smallpar}=\{p\in M\mid -\smallpar<\delta<\smallpar\}$. Then, there exists $\smallpar>0$ such that, the following conditions hold:
\begin{itemize}
\item[i)] $\delta\colon \Omega_{-\smallpar}^{\smallpar}\rightarrow \R$ is smooth and such that $\|\nabla\delta\|_g=1$;
\item[ii)] there exists a diffeomorphism $G\colon (-\smallpar,\smallpar)\times\partial\Omega\rightarrow\Omega_{-\smallpar}^{\smallpar}$ such that:
\begin{equation}
\delta(G(t,p))=t\qquad\text{and}\qquad G_*\partial_t=\nabla\delta.
\end{equation}
\end{itemize}
\end{thm}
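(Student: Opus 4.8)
The plan is to derive this from Theorem~\ref{thm:tub_neigh}, applied to the compact, smooth, non-characteristic hypersurface $S=\partial\Omega$ (by the Remark after Definition~\ref{def:nonchar_submnf}, having no characteristic points is precisely the non-characteristic condition in codimension $1$), and to exploit the fact that the annihilator bundle of a boundary is \emph{globally} trivial. Concretely, Theorem~\ref{thm:tub_neigh} and its proof (Steps~3--4) yield $\smallpar>0$ such that $\delta_{\partial\Omega}$ is smooth on $\Spar$ with $\|\nabla\delta_{\partial\Omega}\|_g=1$, and such that the sub-Riemannian exponential map $E$ restricts to a diffeomorphism from $U=\A(\partial\Omega)\cap\{\sqrt{2H(\cdot)}<\smallpar\}$ onto $\Spar\cup\partial\Omega$, with $\delta_{\partial\Omega}(E(\lambda))=\sqrt{2H(\lambda)}$.

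First I would construct a smooth global unit section of the annihilator line bundle. Covering $\partial\Omega$ by charts in which $\Omega=\{f<0\}$ for a defining function $f$, the local conormals patch to a global non-vanishing section of $\A(\partial\Omega)$, whose sign can be fixed globally (by the choice of which side is $\Omega$) so that its image under $\pi_*\vec{H}$ points \emph{into} $\Omega$; since every point of $\partial\Omega$ is non-characteristic, $\sqrt{2H(\cdot)}$ is a genuine fiberwise norm on $\A(\partial\Omega)$ — this is exactly Step~3 of the proof of Theorem~\ref{thm:tub_neigh} — so, normalizing, we obtain a smooth section $q\mapsto\lambda(q)\in\A_q(\partial\Omega)$ with $2H(\lambda(q))=1$ and with $v_{\lambda(q)}:=\pi_*\vec{H}(\lambda(q))$ pointing into $\Omega$. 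Observe that $v_{\lambda(q)}$ is transverse to $T_q\partial\Omega$, since $\langle\lambda(q),v_{\lambda(q)}\rangle=2H(\lambda(q))=1\neq 0$ while $\lambda(q)$ annihilates $T_q\partial\Omega$.

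Then I would set $G\colon(-\smallpar,\smallpar)\times\partial\Omega\to M$, $G(t,q)=E(t\lambda(q))=\pi\circ e^{t\vec{H}}(\lambda(q))$. The map $(t,q)\mapsto t\lambda(q)$ is a diffeomorphism from $(-\smallpar,\smallpar)\times\partial\Omega$ onto $U$ (each fiber of $\A(\partial\Omega)$ is the line $\R\lambda(q)$, and $\sqrt{2H(t\lambda(q))}=|t|$), so $G$ is a diffeomorphism onto $E(U)=\Spar\cup\partial\Omega=\{|\delta|<\smallpar\}=\Omega_{-\smallpar}^{\smallpar}$. To identify $\delta\circ G$: for $t\neq 0$ the geodesic $t\mapsto G(t,q)$ has $\delta_{\partial\Omega}$-value $|t|>0$, hence never meets $\partial\Omega$; since it enters $\Omega$ for small $t>0$ (by transversality and the sign choice), it stays in $\Omega$ on $(0,\smallpar)$, where $\delta(G(t,q))=\delta_{\partial\Omega}(G(t,q))=\sqrt{2H(t\lambda(q))}=t$, and symmetrically it stays in $M\setminus\overline\Omega$ on $(-\smallpar,0)$, where $\delta(G(t,q))=-\delta_{\partial\Omega}(G(t,q))=-|t|=t$; and $G(0,q)=q$. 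Thus $\delta\circ G=\mathrm{pr}_1$, whence $\delta=\mathrm{pr}_1\circ G^{-1}\in C^\infty(\Omega_{-\smallpar}^{\smallpar})$ — in particular across $\partial\Omega$, where $\delta_{\partial\Omega}$ itself is only continuous. Finally, $\partial_tG(t,q)=\dot\gamma_q(t)$ is the velocity of the unit-speed normal geodesic $\gamma_q(t)=\pi e^{t\vec{H}}(\lambda(q))$, hence horizontal with $\|\dot\gamma_q\|_g=\sqrt{2H(\lambda(q))}=1$; differentiating $\delta(\gamma_q(t))=t$ and using the defining property of the horizontal gradient gives $g(\nabla\delta,\dot\gamma_q)=1$, and since $\|\nabla\delta\|_g=1$ on $\Spar$ (locally $\delta=\pm\delta_{\partial\Omega}$), the equality case of Cauchy--Schwarz in $\D_{\gamma_q(t)}$ forces $\nabla\delta=\dot\gamma_q=G_*\partial_t$; continuity then extends both $G_*\partial_t=\nabla\delta$ and $\|\nabla\delta\|_g=1$ across $\partial\Omega$.

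The only step requiring genuine care is the globalization: checking that the local defining-function conormals really patch into one smooth global section of $\A(\partial\Omega)$ carrying a well-defined ``inward'' orientation, and then tracking the two half-geodesics through the non-smooth locus $\{\delta=0\}=\partial\Omega$ so that the pieces of $G$ over $\{t>0\}$, $\{t=0\}$ and $\{t<0\}$ glue into a single diffeomorphism. Everything else reduces mechanically to Theorem~\ref{thm:tub_neigh} and its proof.
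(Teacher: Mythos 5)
Your proposal is correct and follows essentially the same route as the paper: apply Theorem~\ref{thm:tub_neigh} to $S=\partial\Omega$, trivialize the annihilator line bundle by a global inward-pointing section normalized so that $\sqrt{2H}=1$, define $G(t,q)=E(t\lambda^+_q)$, and use the sign of the parameter to identify $\delta\circ G$ with the first coordinate, which gives smoothness of the signed distance across $\partial\Omega$. The only difference is that you spell out details the paper asserts or delegates (the patching of local conormals, the transversality/connectedness argument for the sign, and the Cauchy--Schwarz identification $G_*\partial_t=\nabla\delta$), and these are all fine.
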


\begin{rmk}
The main differences with respect to Theorem \ref{thm:tub_neigh} are that $\delta$ is smooth up to the boundary of $\Omega$, and the diffeomorphism is built starting from $\partial\Omega$. 
\end{rmk}

\begin{proof}[Proof of Theorem \ref{thm:double_tub_neigh}]
By Theorem \ref{thm:tub_neigh}, applied with $S=\partial\Omega$, the \sr exponential map from $\partial\Omega$ is a diffeomorphism for small covectors, namely there exists $\smallpar>0$, such that:
\begin{equation}
E\colon \A(\partial\Omega)\cap\left\{\sqrt{2H(\lambda)}<\smallpar\right\}\overset{\cong}{\longrightarrow}\Omega_{-\smallpar}^{\smallpar}
\end{equation}
and $|\delta(E(q,\lambda))|=\sqrt{2H(\lambda)}$. Now, since $\Omega$ is an open set with smooth boundary, $\A(\partial\Omega)$ is trivializable, i.e. there exists a never-vanishing and inward-pointing smooth section 
\begin{equation}
\lambda^+\colon \partial\Omega\rightarrow\A(\partial\Omega);\qquad q\mapsto\lambda^+_q.
\end{equation}
Furthermore, by non-characteristic assumption, $\sqrt{2H(\cdot)}$ is a fiber-wise norm on the annihilator bundle, hence we may assume without loss of generality that 
\begin{equation}
\sqrt{2H(\lambda^+_q)}=1, \qquad\forall\,q\in S.
\end{equation}
Thus, we find a unique smooth function $\xi(\lambda)\in C^\infty(\A(\partial\Omega))$ such that
\begin{equation}
\lambda=\xi(\lambda)\lambda^+_q,\qquad \lambda\in \A_q(\partial\Omega).
\end{equation}
Hence, the annihilator bundle is trivializable via the map $\xi$, i.e.
\begin{equation}
F\colon\A(\partial\Omega)\overset{\cong}{\longrightarrow} \partial\Omega\times \R;\qquad F(\lambda)=\left(\pi(\lambda),\xi(\lambda)\right).
\end{equation} 
Notice that, by definition, $|\xi(\lambda)|=\sqrt{2H(\lambda)}$. Moreover, $\xi(\lambda)>0$, whenever $E(q,\lambda)\in\Omega$, by definition of $\lambda^+$, $\xi(0)=0$ and negative otherwise. Therefore, having defined the signed distance such that it is positive inside of $\Omega$, we obtain that 
\begin{equation}
\delta(E(q,\lambda))=\xi(\lambda), \qquad\forall\,\lambda\in\left\{\sqrt{2H(\lambda)}<\smallpar\right\},
\end{equation}
proving the smoothness of $\delta$ on the set $\Omega_{-\smallpar}^{\smallpar}$. Finally, define $G$ as the composition of $E\circ F^{-1}$ restricted to the set $(-\smallpar,\smallpar)\times\partial\Omega$. Since $E$ and $F$ are diffeomorphisms, also $G$ is and moreover, 
\begin{equation}
G(t,q)=E(q,t\lambda^+_q)\qquad\forall\,(t,q)\in(-\smallpar,\smallpar)\times\partial\Omega,
\end{equation}
therefore $\delta(G(t,q))=\delta(E(q,t\lambda^+_q))=\xi(t\lambda^+_q)=t$. This concludes the proof.
\end{proof}

\section{Induced measure on \texorpdfstring{$S$}{S}}
\label{sec:induced_meas}

Let $\omega$ be a smooth measure on $M$. We define a measure on $S$ induced by $\omega$, assigning a tensor density. This construction specializes to the \sr perimeter measure, when $S$ is the boundary of an open set. Recall that, by Theorem \ref{thm:tub_neigh}, there exists $\smallpar>0$ such that \eqref{eq:loc_diffeo} holds locally and define $\mathrm{vol}_H$ as the Riemannian measure associated with $\big(\R^k,\|\cdot\|_\perp\big)$, where $\|\cdot\|_\perp$ is a shorthand notation for $\sqrt{2H(\cdot)}\rvert_{\A_qS}$. In particular, $\mathrm{vol}_H$ is well-defined since $\|\cdot\|_\perp$ is induced by the fiber-wise bilinear form
\begin{equation}
\label{eq:metric_fibers}
(\lambda_1,\lambda_2)_\perp=\sum_{i=1}^N\langle \lambda_1,X_i\rangle\langle \lambda_2,X_i\rangle, \qquad\forall\lambda_1,\lambda_2\in\A_qS,\ q\in S,
\end{equation}
where $\langle\cdot,\cdot\rangle$ denotes the dual coupling.

\begin{lem}
\label{lem:induced_meas}
Let $M$ be a \sr manifold and $S\subset M$ be a compact smooth non-characteristic submanifold of codimension $k\geq 1$. Then, there exists a unique smooth probability measure $\muS$ on $S$, such that, 
\begin{equation}
\label{eq:conv_meas}
\int_Mh(p)\omega^\eps(p) \xrightarrow{\eps\to 0} \int_Sh(q)\muS(q),
\end{equation}
for any $h\in C_c(M)$, where, 
\begin{equation}
\omega^\eps=\frac{1}{\omega(S_\eps)}\mathds{1}_{S_\eps},\qquad\forall\,\eps>0,
\end{equation}
\end{lem}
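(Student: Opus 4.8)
The plan is to transport the measure $\omega$ to the annihilator bundle $\A S$ via the exponential map of Theorem \ref{thm:tub_neigh}, and to read off $\muS$ from the leading-order term of the resulting density along the zero section. To make the construction intrinsic, I would first fix an auxiliary smooth measure $\nu$ on $S$ and combine it with the fiberwise Euclidean measures $\mathrm{vol}_H$ on the fibers $\A_q S$ --- which are well defined and vary smoothly with $q$, thanks to \eqref{eq:metric_fibers} --- to obtain a smooth measure $\nu\otimes\mathrm{vol}_H$ on the total space $\A S$ by integration along the fibers of $\pi\colon\A S\to S$. By Theorem \ref{thm:tub_neigh} and its proof, shrinking $\smallpar$ if necessary, the map $E\colon U\to\Spar\cup S$ is a diffeomorphism, where $U=\{\lambda\in\A S\mid\|\lambda\|_\perp<\smallpar\}$; moreover $E$ restricts to the identity on the zero section (which it sends onto $S\subset M$), and $\deltaS(E(q,\lambda))=\|\lambda\|_\perp$. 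Writing $E^*\omega=J\cdot(\nu\otimes\mathrm{vol}_H)$ for a unique smooth positive function $J$ on $U$, and observing that $S$ and the zero section are respectively $\omega$-null and $(\nu\otimes\mathrm{vol}_H)$-null (they have codimension $k\ge 1$), one may identify integrals over $S_\eps$ with integrals over $\{0<\|\lambda\|_\perp<\eps\}\subset\A S$ through $E$.

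Next I would compute, for $h\in C_c(M)$ and $\eps<\smallpar$, by the change of variables $E$ and Fubini along $\pi$,
\[
\int_{S_\eps}h\,d\omega=\int_S\bigg(\int_{\{\lambda\in\A_qS\,:\,\|\lambda\|_\perp<\eps\}}(h\circ E)(q,\lambda)\,J(q,\lambda)\,d\mathrm{vol}_H(\lambda)\bigg)\,d\nu(q).
\]
Passing to polar coordinates in each Euclidean fiber $(\A_qS,\|\cdot\|_\perp)\cong\R^k$, and using $(h\circ E)(q,0)=h(q)$ together with the continuity of $h\circ E$ and $J$, the inner integral equals $\eps^{k}\big(\omega_k\,h(q)\,J(q,0)+o(1)\big)$, where $\omega_k$ is the $\mathrm{vol}_H$-volume of the unit ball; the remainder is uniform in $q$ by compactness of $S$ and uniform continuity of $h\circ E$ and $J$ near the zero section. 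Hence $\eps^{-k}\int_{S_\eps}h\,d\omega\to\omega_k\int_S h(q)\,J(q,0)\,d\nu(q)$ as $\eps\to 0$, and, taking $h\equiv 1$ on a neighborhood of $S$ (legitimate since $S$ is compact), $\eps^{-k}\omega(S_\eps)\to\omega_k c_0$ with $c_0:=\int_S J(q,0)\,d\nu(q)\in(0,\infty)$, because $J>0$.

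Dividing, $\int_M h\,\omega^\eps\to c_0^{-1}\int_S h(q)\,J(q,0)\,d\nu(q)$ for every $h\in C_c(M)$ --- if $\operatorname{supp}h$ is disjoint from $S$, then $S_\eps\cap\operatorname{supp}h=\varnothing$ for $\eps$ small and both sides vanish. Thus \eqref{eq:conv_meas} holds with $\muS:=c_0^{-1}\,J(\cdot,0)\,\nu$, which is a smooth probability measure on $S$. It is independent of the choice of $\nu$: replacing $\nu$ by $\psi\nu$ for a smooth positive $\psi$ replaces $J$ by $J/\psi$, so $J(\cdot,0)\,\nu$ and $c_0$ are unchanged. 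Uniqueness of $\muS$ is then immediate, since \eqref{eq:conv_meas} identifies the limiting positive linear functional on $C_c(M)$ with $h\mapsto\int_S h\,d\muS$, and by the Riesz representation theorem a Radon measure concentrated on the compact set $S$ is determined by its action on $C_c(M)$.

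The only genuinely delicate point is the uniformity in $q\in S$ of the remainder in the fiberwise polar-coordinate expansion --- this is precisely where the compactness of $S$ and the (uniform) continuity of $h$ and of $J$ enter; everything else amounts to a change of variables and Fubini's theorem.
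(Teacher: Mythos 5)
Your proof is correct and follows essentially the same route as the paper: both rest on the tubular-neighborhood diffeomorphism $E$ of Theorem \ref{thm:tub_neigh}, the fiberwise volume $\mathrm{vol}_H$ coming from \eqref{eq:metric_fibers}, and a polar-coordinate blow-down in the fibers that extracts the factor $\eps^k$ before normalizing. The only difference is bookkeeping: you exhibit the limit density globally as $J(\cdot,0)$ relative to an auxiliary measure $\nu\otimes\mathrm{vol}_H$ and then check independence of $\nu$, whereas the paper writes the same density in local coordinates and glues with a partition of unity.
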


\begin{proof}
Proceeding with hindsight, we are going to define explicitly the measure $\muS$ and then prove the convergence. We may define $\muS$ locally, hence, fix an open coordinate chart $V\subset S$ for $S$ and a local trivialization of $\A S$ over $V$, so that 
\begin{equation}
\A S\rvert_V\cong V\times \R^k.
\end{equation}
By Theorem \ref{thm:tub_neigh}, we have that, denoting by $V_{\smallpar}=E(\A S\rvert_V\cap\{\sqrt{2H(\cdot)}<\smallpar\})$, 
\begin{equation}
V_{\smallpar}\cong\A S\rvert_V\cap\left\{\sqrt{2H(\cdot)}<\smallpar\right\}\cong V\times B^H_{\smallpar}(0).
\end{equation}
Consider on $V_{\smallpar}$, coordinates $(x,z)$ where $(x_1,\ldots,x_{n-k})$ are coordinates on $V$ and $S\cap V_{\smallpar}=\{(x,z)\mid z=0\}$. Thus, since $\omega$ is smooth, we have
\begin{equation}
d\omega(x,z)=\omega(x,z)dxdz,\qquad\text{with }\omega(\cdot)\in C^\infty(V_{\smallpar}),
\end{equation}
where $dx$ and $dz$ are the Lebesgue measures in coordinates. Moreover, since \eqref{eq:metric_fibers} is a metric along the fibers, we can define canonically a volume associated with $H$, which in coordinates is given by 
\begin{equation}
d\mathrm{vol}_H(z)=\sqrt{\det H_q(z)}dz, \qquad\forall\, q\in S,
\end{equation}
with never-vanishing density. Therefore, we may rewrite $\omega$ in terms of $\mathrm{vol}_H$, obtaining
\begin{equation}
\label{eq:omega_vol}
d\omega(x,z)=\omega(x,z)dxdz=\frac{\omega(x,z)}{\sqrt{\det H_q(z)}}dxd\mathrm{vol}_H(z)
\end{equation}
Finally, on the fiber, we can choose an orthonormal (w.r.t. $\sqrt{2H(\cdot)}$) basis of smooth local sections $\{\lambda_1,\ldots,\lambda_k\}$, so that $\mathrm{vol}_H(\lambda_1,\ldots,\lambda_k)=1$, and define $\tilde{\mu}_S$ in coordinates $(x,z)$, to be the contraction of \eqref{eq:omega_vol} along these covectors, restricted to $S$, namely
\begin{equation}
\tilde{\mu}_S=\frac{\omega(x,0)}{\sqrt{\det H_q(0)}}dxd\mathrm{vol}_H(\lambda_1,\ldots,\lambda_k)=\frac{\omega(x,0)}{\sqrt{\det H_q(0)}}dx.
\end{equation}
One can check that this procedure defines a smooth measure on $S$, independently on the choice of the coordinates. We can now verify the convergence, using a partition of the unity argument. Fix a covering of $S$ with a finite number of open charts $\{V_i\}_{i=1}^L$ and consider the associated covering $\{V_{\smallpar}^i\}$ of $S\cup\Spar$, defined by 
\begin{equation}
V_{\smallpar}^i=E\left(\A S\rvert_{V_i}\cap\{\sqrt{2H(\cdot)}<\smallpar\}\right),\qquad\forall\,i=1,\ldots,L.
\end{equation}
Then, consider $\{\rho_i\}_{i=1}^L$ to be a partition of unity subordinate to the covering $\{V_{\smallpar}^i\}$ of $S\cup\Spar$. Exploiting the coordinate expression of $\tilde{\mu}_S$, we have, for any $\eps\leq\smallpar$:
\begin{equation}
\begin{split}
\omega(S_\eps)&=\int_{S_\eps}\sum_{i=1}^L\rho_i(q)d\omega(q)=\sum_{i=1}^L\int_{V_i}\int_{B_\eps^H(x)}\rho_i(x,z)\frac{\omega(x,z)}{\sqrt{\det H_q(z)}}d\mathrm{vol}_H(z)dx\\
							&=\eps^k\sum_{i=1}^L\int_{V_i}\int_0^1\int_{\mathbb{S}^{k-1}}\rho_i(x,\eps r,\theta)\frac{\omega(x,\eps r,\theta)}{\sqrt{\det H_q(\eps r,\theta)}}r^{k-1}d\theta dr dx,
\end{split}
\end{equation}
having expressed the volume $\mathrm{vol}_H$ in polar coordinates $r^{k-1}d\theta dr$. Therefore, up to a factor $\eps^k$, we see that 
\begin{equation}
\frac{\omega(S_\eps)}{\eps^k}\xrightarrow{\eps\to 0}\varpi_k\sum_{i=1}^L\int_{V_i}\rho_i(x,0) d\tilde{\mu}_S(x)=\varpi_k\int_Sd\tilde{\mu}_S
\end{equation}
where $\varpi_k$ is the volume of the standard unit ball in $\R^k$. Finally, reasoning as above, since for any $h\in C_c(M)$, we are able to extract a factor $\eps^k$ from the integral of $h$ over $S_\eps$, we obtain the convergence in the weak-star topology \eqref{eq:conv_meas}, having normalized $\tilde{\mu}_S$ to obtain a probability measure $\muS$. 
\end{proof}

\begin{rmk}
Since $\omega^\eps$, for any $\eps\leq\smallpar$, has compact support which is contained in $\Spar$, we can extend the convergence \eqref{eq:conv_meas} to any continuous function on $M$. 
\end{rmk}

\section{Heat content for submanifolds}
\label{sec:hc_submnf}

Let $M$ be a \sr manifold, equipped with a smooth measure $\omega$, and let $S\subset M$ be a smooth, compact submanifold of codimension $k\geq 1$. We may consider $\mu$ a smooth probability measure on $S$ as initial datum for the heat equation, in the sense of distributions, and study the associated Cauchy problem:
\begin{equation}\label{eq:he_submnf}
\begin{aligned}
(\partial_t -\Delta)u(t,x)  =  0, & &\qquad &\forall (t,x) \in (0,\infty) \times M, \\
u(t,\cdot) \xrightarrow{t\to0}  \mu, & &  \qquad & \text{in }\mathcal{D}'(M).
\end{aligned}
\end{equation}
A solution to this problem, in the sense of distribution, is given by 
\begin{equation}
u(t,x)=\int_Sp_t(x,y)d\mu(x), \qquad\forall\, (t,x) \in (0,\infty) \times M,
\end{equation}
which, by hypoellipticity, is a smooth function for positive times. Recall that, by definition of the relative heat content associated with an open set $\Omega\subset M$, we have
\begin{equation}
\label{eq:rhc_aux}
H_\Omega(t)=\int_\Omega\int_{\Omega}p_t(x,y)d\omega(x)d\omega(y),\qquad\forall\,t>0.
\end{equation} 
Thus, a suitable generalization of \eqref{eq:rhc_aux} for a submanifold $S$ seems to be:
\begin{equation}
\label{eq:hc_S}
H_S(t)=\int_S\int_Sp_t(x,y)d\mu(x)d\mu(y),\qquad\forall\,t>0.
\end{equation}
Moreover, when $S$ is non-characteristic, Lemma \ref{lem:induced_meas} provides with a canonical probability measure on $S$, induced by $\omega$, i.e. $\muS$. Henceforth, we assume $S$ non-characteristic and fix $\mu=\muS$. In this setting, we can hope to obtain an asymptotic expansion of \eqref{eq:hc_S}. 

\begin{prop}
\label{prop:hc_approx}
Let $M$ be a \sr manifold, equipped with a smooth measure $\omega$, let $S\subset M$ be a smooth, compact and non-characteristic submanifold of codimension $k\geq 1$ and fix the probability measure $\muS$ on $S$. Define, for any $\eps\leq\smallpar$, 
\begin{equation}
\label{eq:def_approx}
H^{\eps}_S(t)=\int_M\int_Mp_t(x,y)d\omega^\eps(x)d\omega^\eps(y),\qquad\forall\,t>0.
\end{equation}
Then, for any $t>0$, 
\begin{equation}
\label{eq:point_conv}
H^{\eps}_S(t)\xrightarrow{\eps\to0}H_S(t).
\end{equation}
\end{prop}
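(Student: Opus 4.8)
The plan is to prove the pointwise convergence $H^\eps_S(t)\to H_S(t)$ by unwinding the definition \eqref{eq:def_approx} as a double integral against the product measure $\omega^\eps\otimes\omega^\eps$ and invoking the weak-$*$ convergence $\omega^\eps\rightharpoonup\muS$ from Lemma \ref{lem:induced_meas}, applied in both variables. The crucial point is that, for each fixed $t>0$, the function $(x,y)\mapsto p_t(x,y)$ is smooth on $M\times M$ by hypoellipticity of the heat operator, hence continuous; and since each $\omega^\eps$ (for $\eps\le\smallpar$) is supported in the fixed compact set $\overline{\Spar}$, only the restriction of $p_t$ to the compact set $\overline{\Spar}\times\overline{\Spar}$ matters, where $p_t$ is uniformly continuous and bounded. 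This is what makes the test function admissible.

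First I would fix $t>0$ and write $H^\eps_S(t)=\int_M\left(\int_M p_t(x,y)\,d\omega^\eps(y)\right)d\omega^\eps(x)$. The inner integral defines, for each $x$, a quantity $\Phi^\eps(x):=\int_M p_t(x,\cdot)\,d\omega^\eps$; since $p_t(x,\cdot)\in C_c^\infty$ restricted to a neighborhood of $\overline{\Spar}$ — or more precisely, since we may multiply by a fixed cutoff $\chi\in C_c^\infty(M)$ equal to $1$ on $\overline{S_{\smallpar}}$ without changing the integral, as $\mathrm{supp}\,\omega^\eps\subset\overline{\Spar}$ — Lemma \ref{lem:induced_meas} (in the $y$-variable) gives $\Phi^\eps(x)\to\Phi(x):=\int_S p_t(x,y)\,d\muS(y)$ for each fixed $x$. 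Next I would upgrade this to uniform convergence of $\Phi^\eps$ on $\overline{\Spar}$: the family $\{p_t(x,\cdot)\chi\}_{x\in\overline{\Spar}}$ is equicontinuous and uniformly bounded (being the image under $x$ of a compact set through the continuous map into $C(M)$ with the sup norm — equivalently, $p_t$ and its first derivatives are bounded on the compact $\overline{\Spar}\times\overline{\Spar}$), so by the same partition-of-unity / scaling argument as in the proof of Lemma \ref{lem:induced_meas} the convergence is uniform in $x$. Then $\int_M\Phi^\eps\,d\omega^\eps=\int_M(\Phi^\eps-\Phi)\,d\omega^\eps+\int_M\Phi\,d\omega^\eps$; the first term is bounded by $\|\Phi^\eps-\Phi\|_{\infty,\overline{\Spar}}\to 0$ since $\omega^\eps$ is a probability measure, and the second converges to $\int_S\Phi\,d\muS=H_S(t)$ by a final application of Lemma \ref{lem:induced_meas} in the $x$-variable, since $\Phi=\int_S p_t(\cdot,y)\,d\muS(y)$ is continuous on $M$ (again by continuity of $p_t$ and compactness of $S$, or by smoothness of $u(t,\cdot)$).

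The main obstacle is the uniformity step: weak-$*$ convergence of $\omega^\eps$ against a \emph{fixed} continuous function is immediate, but here the test function $p_t(x,\cdot)$ varies with the outer integration variable, so one cannot apply the lemma naively twice. The cleanest way around this is precisely to establish the uniform convergence $\Phi^\eps\to\Phi$ on $\overline{\Spar}$ described above; alternatively, one can treat the whole thing as a single weak-$*$ convergence $\omega^\eps\otimes\omega^\eps\rightharpoonup\muS\otimes\muS$ on $M\times M$ — which follows from Lemma \ref{lem:induced_meas} since products of the approximating measures converge weakly-$*$ to the product of the limits — tested against the fixed continuous function $p_t\in C(\overline{\Spar}\times\overline{\Spar})$ (extended by a cutoff to $C_c(M\times M)$). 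I would present the argument via this product-measure viewpoint, as it is the shortest: note $H^\eps_S(t)=\int_{M\times M}p_t\,d(\omega^\eps\otimes\omega^\eps)$, observe that $\omega^\eps\otimes\omega^\eps$ are probability measures supported in the fixed compact $\overline{\Spar}\times\overline{\Spar}$ converging weakly-$*$ to $\muS\otimes\muS$, and conclude by continuity of $p_t$ that the integrals converge to $\int_{S\times S}p_t\,d(\muS\otimes\muS)=H_S(t)$.
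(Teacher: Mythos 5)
Your proposal is correct and follows essentially the same route as the paper: the paper also writes $H^\eps_S(t)=\langle\omega^\eps,u^\eps(t,\cdot)\rangle$, proves $u^\eps(t,\cdot)\to u(t,\cdot)$ via Lemma \ref{lem:induced_meas}, upgrades this to (locally) uniform convergence on $\Spar$ using boundedness of $p_t$ on compact sets, and then splits $|H^\eps_S(t)-H_S(t)|$ into exactly your two terms, the second handled by weak-$*$ convergence against the fixed continuous function $u(t,\cdot)$. Your product-measure reformulation $\omega^\eps\otimes\omega^\eps\rightharpoonup\muS\otimes\muS$ is just an equivalent repackaging of this, and your explicit appeal to equicontinuity of $\{p_t(x,\cdot)\}_{x\in\overline{\Spar}}$ is in fact a slightly more careful justification of the uniformity step than the paper's.
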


\begin{proof}
Firstly, notice that, applying Lemma \ref{lem:induced_meas}, we have that
\begin{equation}
\label{eq:conv_sol}
u^\eps(t,x)=\int_Mp_t(x,y)d\omega^\eps(y)=\langle\omega^\eps,p_t(x,\cdot)\rangle\xrightarrow{\eps\to 0}\langle\muS,p_t(x,\cdot)\rangle,
\end{equation}
for any $t>0$ and $y\in M$. Secondly, since the heat kernel $p_t$ is smooth on $M\times M$, there exists a constant $C(t)>0$ depending on $t$, such that:
\begin{equation}
\label{eq:hk_est}
\left\|p_t(\cdot,\cdot)\right\|_{L_{\mathrm{loc}}^\infty(M\times M)}\leq C(t),
\end{equation}
and  we remark that the constant $C(t)$ explodes as $t\to0$. Therefore, the convergence \eqref{eq:conv_sol} is locally uniform with respect to $x\in M$. In conclusion, 
\begin{equation}
\begin{split}
|H^{\eps}_S(t)-H_S(t)|&=|\langle \omega^\eps,u^\eps(t,\cdot)\rangle-\langle \muS,u(t,\cdot)\rangle|\\
											&\leq|\langle \omega^\eps,u^\eps(t,\cdot)-u(t,\cdot)\rangle|+|\langle \omega^\eps,u(t,\cdot)\rangle-\langle \muS,u(t,\cdot)\rangle|\\
											&\leq \|u^\eps(t,\cdot)-u(t,\cdot)\|_{L^\infty(\Spar)}|\langle \omega^\eps,1\rangle|+|\langle \omega^\eps-\muS,u(t,\cdot)\rangle|,
\end{split}
\end{equation}
and taking the limit as $\eps\to 0$ in the last line proves the desired result. 
\end{proof}

\begin{rmk}
The convergence \eqref{eq:point_conv} is never uniform as $t\to 0$, being the constant $C(t)$ in \eqref{eq:hk_est} not bounded as $t\to 0$. This suggests that, while $H_S^\eps$ seems to be the best possible approximation of the heat content associated with $S$, using such a strategy to deduce the asymptotics of $H_S(t)$ is not correct. Indeed, we can show that the coefficients of the expansion of $H_S^\eps$ can not approximate those of $H_S(t)$, in general.
\end{rmk}

\section{An example: closed simple curve in \texorpdfstring{$\R^3$}{R3}}
\label{sec:eucl_ex}
In $\R^3$ equipped with the Euclidean scalar product and the Lebesgue measure, let us consider a biregular closed simple curve, parametrized by arc-length, $\gamma\colon[0,\ell]\rightarrow\R^3$, where $\ell$ denotes the length of $\gamma$. Recall that a smooth curve $\gamma\colon I\rightarrow\R^3$ is biregular if 
\begin{equation}
\dot\gamma(s)\wedge\ddot\gamma(s)\neq 0, \qquad\forall\,s\in I,
\end{equation}
where $\wedge$ denotes the cross product in $\R^3$. In this setting, define $S=\gamma([0,\ell])\subset\R^3$, submanifold of codimension $2$. The tubular neighborhood of $S$ given by Theorem \ref{thm:tub_neigh} coincides with the usual Euclidean tubular neighborhood, which can be conveniently described by the Frenet-Serret moving frame along $\gamma$, it being biregular. In particular, denoting by $\{T(s),N(s),B(s)\}$ the Frenet-Serret frame for $s\in[0,\ell]$, we have,
\begin{equation}
S_\eps=\{\gamma(s)+r(\cos\theta N(s)+\sin\theta B(s))\mid s\in[0,\ell],\ \theta\in (0,2\pi],\ r\in (0,\eps)\}, \qquad\forall\eps\leq\smallpar.
\end{equation}
Thus, in coordinates $(s,r,\theta)$, the Lebesgue measure is 
\begin{equation}
\label{eq:leb_coord}
dxdydz=r(1-rk(s)\cos\theta)dsdrd\theta, 
\end{equation}
where $k(s)=\|\ddot\gamma(s)\|$ is the curvature of $\gamma$, and the procedure of Lemma \ref{lem:induced_meas} gives the probability measure $\muS=ds/\ell$. Following the discussion of Section \ref{sec:hc_submnf}, we define the heat content associated with $S$ as
\begin{equation}
H_S(t)=\frac{1}{\ell^2}\frac{1}{(4\pi t)^{3/2}}\int_0^\ell\int_0^\ell e^{-\frac{|\gamma(s)-\gamma(\tau)|^2}{4t}}dsd\tau, \qquad\forall t>0.
\end{equation}

\subsection*{Asymptotic expansion of \texorpdfstring{$H_S(t)$}{HS(t)}}

Denote by $\phi_\tau(s)=|\gamma(s)-\gamma(\tau)|^2$. We can explicitly compute the asymptotic expansion of $H_S$ applying the Laplace method to the integral
\begin{equation}
\label{eq:lapl_int}
I_\tau(\lambda)=\int_0^\ell e^{-\phi_\tau(s)\lambda}ds, \qquad \text{as }\lambda\to +\infty.
\end{equation}
In particular, since $\gamma$ is a simple curve, the phase $\phi_\tau(s)$ has a strict minimum at $s=\tau$, thus there exists $\eps=\eps(\tau)$ such that $\phi_\tau'(s)>0$ for any $s\in[\tau-\eps,\tau+\eps]\setminus\{\tau\}$. A direct computation, building upon $\|\dot\gamma\|=1$, yields:
\begin{equation}
\begin{aligned}
&\phi_\tau'(\tau)=0,  &\qquad & \phi_\tau''(\tau)=2, 						 	 &\qquad & \phi_\tau^{(5)}(\tau)=-5\partial_\tau\left(k(\tau)^2\right),\\
&\phi_\tau'''(\tau)=0,&\qquad & \phi_\tau^{(4)}(\tau)=-2k(\tau)^2, &\qquad & \phi_\tau^{(6)}(\tau)=-9\partial_\tau^2\left(k(\tau)^2\right)+2\|\dddot\gamma(\tau)\|^2.
\end{aligned}
\end{equation}
Therefore the phase has a Taylor expansion at its minimum and we can apply the Laplace method, which gives a full asymptotic expansion, cf. \ \cite[Thm. 8.1]{MR1429619},
\begin{equation}
I_\tau(\lambda)\sim e^{-\lambda \phi_\tau(\tau)}\sum_{i=0}^\infty\Gamma\left(\frac{i+1}{2}\right)\frac{a_i(\tau)}{\lambda^{\frac{i+1}{2}}}\qquad\text{as }\lambda\to\infty,
\end{equation}
where $\Gamma$ is the Euler Gamma function, and the $a_i(\tau)$ are given by explicit formulas in terms of the derivatives of $\phi_\tau$ at its minimum. Moreover, since, for any $\tau\in[0,\ell]$, the phase has an interior minimum, the odd coefficients of the expansion vanish. For the even-order coefficients, we have
\begin{equation}
\label{eq:coeff_form}
a_0=1,  \qquad  a_2=\frac{1}{8}k(\tau)^2, \qquad  a_4=\frac{1}{1152}\left(36\partial_\tau^2\left(k(\tau)^2\right)+35k(\tau)^2-8\|\dddot\gamma(\tau)\|^2 \right).
\end{equation} 
To conclude, we have to integrate with respect to $\tau$ the asymptotic expansion of \eqref{eq:lapl_int}. In general, the expansion may not be uniform in $\tau\in (0,\ell)$, however, since $\gamma$ is uniformly continuous on $[0,\ell]$ and can be extended by periodicity on the whole real line, the choice of $\eps>0$ such that $\phi_\tau'(s)>0$ for any $s\in[\tau-\eps,\tau+\eps]\setminus\{\tau\}$ can be made uniform, providing uniform estimates of the remainder. In particular, we have
\begin{equation}
H_S(t)\sim\frac{1}{\ell^2}\frac{1}{(4\pi t)^{3/2}}\int_0^\ell\int_{\tau-\eps}^{\tau+\eps} e^{-\frac{\phi_\tau(s)}{4t}}dsd\tau, \qquad\text{as }t\to 0,
\end{equation} 
where $\eps>0$ is chosen uniformly with respect to $\tau$. Hence, we conclude that:
\begin{equation}
\label{eq:hc_exp}
\begin{split}
H_S(t)&\sim \frac{1}{\ell^2}\frac{1}{(4\pi t)^{3/2}}\int_0^\ell\sum_{i=0}^\infty\Gamma\left(\frac{2i+1}{2}\right)a_{2i}(\tau)(4t)^{\frac{2i+1}{2}}d\tau\\
			&=\frac{1}{\ell^2}\frac{1}{4\pi t}\sum_{i=0}^\infty\alpha_it^i,
\end{split}
\end{equation}
as $t\to 0$, where the coefficients $\alpha_i$'s are defined by 
\begin{equation}
\alpha_i=2^{2i-1}(2i+1)\int_0^\ell a_{2i}(\tau)d\tau,\qquad\forall\,i\geq0, 
\end{equation}
and are given explicitly by \eqref{eq:coeff_form}, up to $i=2$.

\subsection*{Approximation via tubular neighborhoods}
We compute the asymptotic expansion of the approximation defined in Proposition \ref{prop:hc_approx}, when $S=\gamma([0,\ell])$, and we compare its coefficients with those obtained in \eqref{eq:hc_exp}. Recall that, by \eqref{eq:def_approx}, we set
\begin{equation}
H^{\eps}_S(t)=\frac{1}{|S_\eps|^2}\int_{S_\eps}\int_{S_\eps}\frac{1}{(4\pi t)^{3/2}} e^{-\frac{|x-y|^2}{4t}}dxdy,\qquad\forall\,t>0.
\end{equation} 
By Theorem \ref{t:rel}, there exists an asymptotic expansion up to order $4$ in $\sqrt{t}$, of the form
\begin{equation}
\label{eq:hca_eps}
H^{\eps}_S(t) =\frac{1}{|S_\eps|^2}\left(\alpha_0^\eps+\alpha_1^\eps t^{1/2}+\alpha_3^\eps t^{3/2}+o(t^2)\right),
\end{equation}
where $\alpha_0^\eps=|S_\eps|$ and 
\begin{equation}
\label{eq:coeff_eps}
\alpha_1^\eps=-\frac{1}{\sqrt\pi}\sigma^\eps(\partial S_\eps),\qquad\alpha_3^\eps=-\frac{1}{12\sqrt\pi}\int_{\partial S_\eps}\left(2\nabla\deltaS\cdot\nabla(\Delta\deltaS )-(\Delta\deltaS)^2 \right)d\sigma^\eps.
\end{equation}
Notice that, in tubular coordinates $(s,r,\theta)$, $\nabla\deltaS=\partial_r$, and, since \eqref{eq:leb_coord} holds,  
\begin{equation}
d\sigma^r=r(1-rk(s)\cos\theta)dsd\theta,\qquad\Delta\deltaS= \frac{1}{r}+\partial_r\left(\log(1-rk(s)\cos\theta)\right). 
\end{equation}
Thus, we can explicitly compute the coefficients \eqref{eq:coeff_eps}:
\begin{equation}
\label{eq:coeff_eps2}
\begin{split}
\alpha_1^\eps &=-\frac{\eps}{\sqrt\pi}\int_0^\ell\int_0^{2\pi}(1-\eps k(s)\cos\theta)dsd\theta=-2\eps\ell\sqrt\pi\\
\alpha_3^\eps &=-\frac{\eps}{12\sqrt\pi}\int_0^\ell\int_0^{2\pi}\left(-\frac{3}{\eps^2}+\frac{A_1(s,\eps,\theta)}{\eps}+A_0(s,\eps,\theta)\right)(1-\eps k(s)\cos\theta)dsd\theta, 
\end{split}
\end{equation}
where $A_0,A_1$ are smooth functions defined by 
\begin{equation}
\begin{split}
A_0(s,r,\theta) &=2\partial^2_r\left(\log(1-rk(s)\cos\theta)\right)-(\partial_r\left(\log(1-rk(s)\cos\theta)\right))^2,\\
A_1(s,r,\theta) &=-2\partial_r\left(\log(1-rk(s)\cos\theta)\right).
\end{split}
\end{equation}

\subsection*{Comparison between the two approaches}
Let us compare the asymptotics of the two quantities in exam: for a fixed $\eps>0$ and as $t\to 0$, we have from \eqref{eq:hc_exp} and \eqref{eq:hca_eps}
\begin{equation}
\begin{split}
H^{\eps}_S(t) &= \frac{1}{|S_\eps|^2}\left(\alpha_0^\eps+\alpha_1^\eps t^{1/2}+\alpha_3^\eps t^{3/2}+o(t^2)\right),\\
       H_S(t) &= \frac{1}{\ell^2}\frac{1}{4\pi t}\left(\alpha_0+\alpha_1 t+\alpha_3 t^2+o(t^2)\right),
\end{split}
\end{equation}
where the coefficients are given by \eqref{eq:coeff_eps2} and \eqref{eq:coeff_form}, respectively. At this stage, on the one hand, we notice that the order in $t$ of the expansions doesn't agree. On the other hand, the coefficients $\alpha_1^\eps,\alpha_3^\eps$ do not contain fine geometrical information of $S$, indeed, the functions $A_0$, $A_1$ depend only the curvature of $\gamma$, as opposed to \eqref{eq:coeff_form}, where derivatives of $k(s)$ appear. Moreover, at a formal level, $\alpha_1^\eps\to 0$ as $\eps\to 0$, whereas $\alpha_3^\eps$ explodes: it is possible to give meaning to these limits, taking into account the parabolic scaling between the space and time variables of the heat equation and formally replacing $t\mapsto \eps^2 t$, in \eqref{eq:hca_eps}, however, we do not recover any geometrical meaning. 

\begin{rmk}
The approximating relative heat content is too coarse a tool to detect the geometry of a submanifold of high codimension and a different strategy is needed. Inspired by the construction of the tubular neighborhood for $S$, we may study the asymptotic expansion of $H_S(t)$ using a perturbative approach similarly to what has been done in \cite{MR3207127}, presenting the sub-Laplacian of $S$ as a perturbation of a simpler operator. This will be object of future research. 
\end{rmk}
%

\bibliographystyle{alphaabbr}
\bibliography{biblio-submnf-TSG}

\end{document}